\DeclareMathAlphabet{\mathpzc}{OT1}{pzc}{m}{it}
\newtheorem{te}{Theorem}[section]
\theoremstyle{definition}
\theoremstyle{os}
\newtheorem{os}[te]{Remark}
\theoremstyle{prop}
\newtheorem{prop}[te]{Proposition}
\theoremstyle{lem}
\theoremstyle{coro}
\numberwithin{equation}{section}
\def \l { \left( }
\def \r {\right) }
\def \ll { \left\lbrace }
\def \rr { \right\rbrace }
\begin{document}

	\title [Space-fractional higher-order heat-type equations] {Pseudoprocesses related to space-fractional higher-order heat-type equations}
	\author{Enzo Orsingher}
	\author{Bruno Toaldo} 
	\address{Department of Statistical Sciences, Sapienza University of Rome}
	\email{enzo.orsingher@uniroma1.it} \email{bruno.toaldo@uniroma1.it}
	\keywords{Weyl derivatives, Riesz derivatives, Feller fractional operators, pseudoprocesses, stable processes, subordinators, continuous-time random walks}
	\date{\today}
	\subjclass[2000]{60K99, 35Q99}

		\begin{abstract}
In this paper we construct pseudo random walks (symmetric and asymmetric) which converge in law to compositions of pseudoprocesses stopped at stable subordinators. We find the higher-order space-fractional heat-type equations whose fundamental solutions coincide with the law of the limiting pseudoprocesses. The fractional equations involve either Riesz operators or their Feller asymmetric counterparts. The main result of this paper is the derivation of pseudoprocesses whose law is governed by heat-type equations of real-valued order $\gamma>2$. The classical pseudoprocesses are very special cases of those investigated here.
		\end{abstract}
	
	\maketitle

\section{Introduction}

In this paper we consider pseudoprocesses related to different types of fractional higher-order heat-type equations. Our starting point is the set of higher-order equations of the form
\begin{equation}
\frac{\partial}{\partial t} u_m(x, t) \, = \, \kappa_m \frac{\partial^m}{\partial x^m} u_m(x, t), \qquad x \in \mathbb{R}, t>0, m\in \mathbb{N} > 2,
\label{11}
\end{equation}
whose solutions have been investigated by many outstanding mathematicians such as \citet{berna, levy, polia} and also, more recently, by means of the steepest descent method, by \citet{liwong}. In \eqref{11} the constant $\kappa_m$ is usually chosen in the form
\begin{equation}
\kappa_m \, = \, 
\begin{cases}
\pm 1, \qquad &m = 2n+1, \\
(-1)^{n+1}, &m=2n.
\end{cases}
\end{equation}
In our investigations we assume throughout that $\kappa_m = (-1)^n$ when $m = 2n+1$. Pseudoprocesses related to \eqref{11} have been constructed in the same way as for the Wiener process by \citet{dale1, dale2, krylov, lado, myamoto}. More recently pseudoprocesses related to \eqref{11} have been considered by \citet{debbi06, lachal2003, lachalpseudo, mazzucchi}. For equations of the form
\begin{align}
\frac{\partial}{\partial t} u_\gamma (x, t) \, = \, \frac{\partial^\gamma}{\partial |x|^\gamma} u_\gamma (x, t), \qquad x \in \mathbb{R}, t>0,
\label{13}
\end{align}
where $0 < \gamma \leq 2$, and $\frac{\partial^\gamma}{\partial |x|^\gamma}$ is the Riesz operator, the fundamental solution has the form of the density of a symmetric stable process as Riesz himself has shown. For $\gamma > 2$ the equation \eqref{13} was studied by Debbi (see \cite{debbi06, debbi}) who proved the sign-varying character of the corresponding solutions.

For asymmetric fractional operators of the form
\begin{equation}
^FD^{\gamma, \theta} \, = \, -\left[ \frac{\sin \frac{\pi}{2}(\gamma - \theta)}{\sin \pi \gamma} \frac{^+\partial^\gamma}{\partial x^\gamma} + \frac{\sin \frac{\pi}{2}(\gamma + \theta)}{\sin \pi \gamma} \frac{^-\partial^\gamma}{\partial x^\gamma} \right]
\label{14}
\end{equation}
the equation
\begin{equation}
\frac{\partial}{\partial t} u_{\gamma, \theta} (x, t) \, = \, ^FD^{\gamma, \theta} u_{\gamma, \theta}(x, t), \qquad x \in \mathbb{R}, t>0, 0 < \gamma \leq 2,
\label{15}
\end{equation}
was studied by \citet{Feller52} who proved that the fundamental solution to \eqref{15} is the law of an asymmetric stable process of order $\gamma$. The fractional derivatives appearing in \eqref{14} are the Weyl fractional derivatives defined as
\begin{align}
&\frac{^+\partial^\gamma}{\partial x^\gamma} u(x) \, = \, \frac{1}{\Gamma (m-\gamma)} \frac{d^m}{dx^m} \int_{-\infty}^x \frac{u(y)}{(x-y)^{\gamma +1 -m}} dy \notag  \\
& \frac{^-\partial^\gamma}{\partial x^\gamma} u(x) \, = \, \frac{1}{\Gamma (m-\gamma)} \frac{d^m}{dx^m} \int_{x}^\infty \frac{u(y)}{(y-x)^{\gamma + 1 -m}} dy
\label{16}
\end{align}
where $m-1 < \gamma < m$.
The Riesz fractional derivatives appearing in \eqref{13} are combinations of the Weyl's derivatives \eqref{16} and are defined as
\begin{align}
\frac{\partial^\gamma}{\partial |x |^\gamma} \, = \, -\frac{1}{2\cos \frac{\pi \gamma}{2}} \left[ \frac{^+\partial^\gamma}{\partial x^\gamma} + \frac{^-\partial^\gamma}{\partial x^\gamma} \right].
\end{align}

This paper is devoted to pseudoprocesses related to fractional equations of the form \eqref{13} and \eqref{15} when $\gamma > 2$. Of course, this implies that Weyl's fractional derivatives \eqref{16} are considered in the case $\gamma > 2$. The fundamental solutions of these equations are sign-varying as in the case of higher-order heat-type equations \eqref{11} studied in the literature (compare with \cite{debbi06}).

Fractional equations arise, for example, in the study of thermal diffusion in fractal and porous media (\citet{nigma, saiche}). Other fields of application of fractional equations can be found in \citet{debbi06}. Higher-order equations emerge in many contexts as in trimolecular chemical reactions (\citet{gardiner} page 295) and in the linear approximation of the Korteweg De Vries equation (see \citet{beghin4}).

In our paper we study pseudo random walks (for the definitions and properties of pseudo random walks and variables see \citet{lachalpseudo}) of the form
\begin{equation}
W^{\gamma, 2k\beta} (t) \, = \, \sum_{j=1}^{N \l t\gamma^{-2k\beta} \r} U_j^{2k}(1) Q_j^{\gamma, 2k\beta}
\label{19}
\end{equation}
where the r.v.'s $Q_j^{\gamma, 2k\beta}$ are independent from the Poisson process $N$, from the pseudo r.v.'s $U_j^{2k}(1)$ and from each other and have distribution for $0 < \beta < 1$, $\gamma > 0$, $k \in \mathbb{N}$,
\begin{align}
\Pr \ll Q_j^{\gamma, 2k\beta} > w \rr \, = \, \begin{cases} 1, \qquad & w < \gamma, \\ \l \frac{\gamma}{w} \r^{2k\beta}, & w \geq \gamma. \end{cases}
\label{110}
\end{align}
The $U_j^{2k}(1)$ are independent pseudo r.v.'s with law $u_{2k} (x, 1)$ with Fourier transform
\begin{align}
\int_{-\infty}^\infty e^{i\xi x} u_{2k}(x, 1) \, dx \, = \, e^{-|\xi|^{2k}}.
\end{align}
The Poisson process $N$ appearing in \eqref{19} is homogeneous and has rate $\lambda = \frac{1}{\Gamma (1-\beta)}$. We prove that
\begin{align}
\lim_{\gamma \to 0} W^{\gamma, 2k\beta} (t) \, \stackrel{\textrm{law}}{=} \, U^{2k} \l H^\beta (t) \r
\label{112}
\end{align}
where $U^{2k}$ is the pseudoprocess of order $2k$ related to the heat-type equation \eqref{11} for $m=2k$ and $H^\beta$ is a stable subordinator of order $\beta \in (0,1)$ independent from $U^{2k}$. We show that the law of \eqref{112} is the fundamental solution to
\begin{equation}
\frac{\partial}{\partial t} v_{2k\beta} (x, t) \, = \, \frac{\partial^{2k\beta}}{\partial |x|^{2k\beta}} v_{2k\beta} (x, t), \qquad x \in \mathbb{R}, t>0, \beta \in (0,1), k \in \mathbb{N}.
\end{equation}
In other words, we are able to construct pseudoprocesses of order $\gamma >2$ in the form of integer-valued pseudoprocesses stopped at stable distributed times as the limit of suitable pseudo random walks. We consider also pseudo random walks of the form
\begin{equation}
\sum_{j=0}^{N \l t \gamma^{-\beta(2k+1)} \r} \epsilon_j U_j^{2k+1} (1) Q_j^{\gamma, \beta(2k+1)}
\label{114}
\end{equation}
where the $Q_j^{\gamma, \beta(2k+1)}$ have distribution \eqref{110} (suitably adjusted), $U_j^{2k+1}$ (1) is an odd-order pseudo random variable with law $u_{2k+1} (x, 1)$ and Fourier transform
\begin{align}
\int_{-\infty}^\infty e^{i\xi x} u_{2k+1} (x, 1) \, dx \, = \, e^{-i\xi^{2k+1}}
\end{align}
and the $\epsilon_j$'s are random variables which take values $\pm 1$ with probability $p$ and $q$. All the variables in \eqref{114} are independent from each other and also independent from the Poisson process $N$ with rate $\lambda = \frac{1}{\Gamma (1-\beta)}$. In this case we are able to show that
\begin{equation}
\lim_{\gamma \to 0} W^{\gamma, (2k+1)\beta} (t) \, \stackrel{\textrm{law}}{=} \, U_1^{2k+1} \l H_1^\beta (pt) \r - U_2^{2k+1} \l H_2^\beta (qt) \r
\label{116}
\end{equation}
where $H_j^\beta$, $j=1,2$, are independent stable subordinators  independent also from the pseudoprocesses $U_1$, $U_2$. We prove that the law of \eqref{116} satisfies the higher-order fractional equation
\begin{equation}
\frac{\partial}{\partial t} w_{\beta (2k+1)} (x, t) \, = \, \mathfrak{R} w_{\beta (2k+1)} (x, t), \qquad x \in \mathbb{R}, t>0,
\label{117v}
\end{equation}
where
\begin{equation}
\mathfrak{R} \, = \, -\frac{1}{\cos \frac{\beta \pi}{2}} \left[ p e^{i\pi\beta k} \frac{^+\partial^{\beta(2k+1)}}{\partial x^{\beta(2k+1)}} +qe^{-i\pi\beta k} \frac{^-\partial^{\beta(2k+1)}}{\partial x^{\beta (2k+1)}} \right].
\label{117}
\end{equation}
The Fourier transform of the  fundamental solution of \eqref{117v} reads
\begin{equation}
\widehat{w}_{\beta(2k+1)} (\xi, t) \, = \, e^{-t|\xi|^{\beta(2k+1)} \l 1-i \textrm{ sign}(\xi) \, (p-q) \tan \frac{\beta \pi}{2} \r}
\label{119}
\end{equation}
We note that \eqref{119} corresponds to the Fourier transform of the law of \eqref{116} with a suitable change of the time-scale that is
\begin{align}
&\mathbb{E}\exp \ll i\xi  \left[ U_1^{2k+1} \l H_1^\beta \l \frac{pt}{\cos \frac{\beta \pi}{2}} \r \r - U_2^{2k+1} \l H_2^\beta \l \frac{qt}{\cos \frac{\beta \pi}{2}} \r \r \right] \rr \notag \\
 = \, & e^{-t|\xi |^{\beta(2k+1)} (1-i \textrm{ sign}(\xi) \, (p-q) \, \tan \frac{\beta \pi}{2}}
\end{align}
The mean value here and below must be understood with respect to the signed measure of the pseudoprocess (see for example \cite{debbi06}).
We study also the pseudoprocesses governed by the equation
\begin{align}
\frac{\partial}{\partial t} z_{\beta(2k+1), \theta} (x, t) \, = \, ^FD^{\beta(2k+1), \theta} z_{\beta(2k+1), \theta} (x, t)
\end{align}
where $^FD^{\beta(2k+1), \theta}$ is the operator defined in \eqref{14} with $\gamma$ replaced by $\beta (2k+1)$.
Also in this case we study continuous-time random walks whose limit  has Fourier transform equal to
\begin{equation}
\mathbb{E}e^{i\xi Z^{\beta(2k+1), \theta}} \, = \, e^{-t|\xi |^{\beta(2k+1)}e^{\frac{i\pi\theta}{2} \textrm{ sign}(\xi)}}, \qquad \beta \in (0,1), k \geq 1, -\beta < \theta < \beta.
\end{equation}
When we take into account pseudo random walks constructed by means of even-order pseudo random variables we arrive at limits $Z^{2\beta k, \theta}(t)$, $t>0$, with Fourier transform
\begin{equation}
\mathbb{E}e^{i\xi Z^{2\beta k, \theta} (t)} \, = \, e^{-t|\xi |^{2k\beta} \frac{\cos \frac{\pi}{2}\theta}{\cos \frac{\pi}{2}\beta}}
\end{equation}
which shows the symmetric structure of the limiting pseudoprocess.

\subsection{List of symbols}
For the reader's convenience we give a short list of the most important symbols and definitions appearing in the paper.
\begin{enumerate}
\item[$\bullet$] The right Weyl fractional derivative for $m -1<\gamma < m$, $m \in \mathbb{N}$, $x \in \mathbb{R}$
\begin{equation}
\frac{^+\partial^\gamma}{\partial x^\gamma} u(x, t)  \, = \, \frac{1}{\Gamma (m-\gamma)} \frac{d^m}{d x^m} \int_{-\infty}^x \frac{u(y, t)}{(x-y)^{\gamma + 1 - m}} dy
\end{equation}
\item[$\bullet$] The left Weyl fractional derivative for $m -1<\gamma < m$, $m \in \mathbb{N}$, $x \in \mathbb{R}$,
\begin{equation}
\frac{^-\partial^\gamma}{\partial x^\gamma} u(x, t)  \, = \, \frac{(-1)^m}{\Gamma (m-\gamma)} \frac{d^m}{d x^m} \int_x^\infty \frac{u(y, t)}{(y-x)^{\gamma + 1 - m}} dy
\end{equation}
\item[$\bullet$] The Riesz fractional derivative for $m-1 < \gamma < m$, $m \in \mathbb{N}$, $x \in \mathbb{R}$,
\begin{equation}
\frac{\partial^\gamma}{\partial |x|^\gamma} \, = \, - \frac{1}{2\cos \frac{\gamma \pi}{2}} \left[ \frac{^+\partial^\gamma}{\partial x^\gamma} + \frac{^-\partial^\gamma}{\partial x^\gamma} \right]
\end{equation}
\item[$\bullet$] We introduce the operator $\mathfrak{R}$, for $\beta \in (0,1)$, $k \in \mathbb{N}$, $p,q \in [0,1] : p + q = 1$, $x \in \mathbb{R}$, 
\begin{equation}
\mathfrak{R} \, = \, -\frac{1}{\cos \frac{\beta \pi }{2}} \left[ p e^{i\pi\beta k} \frac{^+\partial^{\beta(2k+1)}}{\partial x^{\beta(2k+1)}} + q e^{-i\pi\beta k} \frac{^-\partial^{\beta(2k+1)}}{\partial x^{\beta(2k+1)}}  \right]
\end{equation}
\item[$\bullet$] The Feller derivative for $m-1 < \gamma < m$, $m \in \mathbb{N}$, $\theta > 0$, $x \in \mathbb{R}$,
\begin{equation}
^FD^{\gamma, \theta} \, = \, - \left[ \frac{\sin \frac{\pi}{2}(\gamma - \theta)}{\sin \pi\gamma} \frac{^+\partial^\gamma}{\partial x^\gamma} + \frac{\sin \frac{\pi}{2}(\gamma + \theta)}{\sin \pi \gamma} \frac{^-\partial^\gamma}{\partial x^\gamma} \right]
\end{equation}
\item[$\bullet$] $U^{m}(t)$, $t>0$ is a pseudoprocess of order $m \in \mathbb{N}$ with law $u_m (x, t)$, $x \in \mathbb{R}$, $t>0$, governed by \eqref{11}
\item[$\bullet$] $H^\beta (t)$ is a stable subordinator of order $\beta \in (0,1)$ with probability density $h_\beta (x, t)$, $x \geq0$, $t\geq 0$.

\end{enumerate}

\section{Preliminaries and auxiliary results}
In this paper we consider higher-order heat-type equations where the space derivative is fractional in different ways.
\subsection{Weyl fractional derivatives.} First of all we consider equations of the form
\begin{equation}
\frac{\partial}{\partial t} u_\gamma (x, t) \, = \, \frac{^{\pm}\partial^\gamma}{\partial x^\gamma} u_\gamma (x, t), \qquad x \in \mathbb{R}, t>0, \gamma >0,
\end{equation}
where $\frac{^{\pm}\partial^\gamma}{\partial x^\gamma}$ are the space-fractional Weyl derivatives defined as
\begin{align}
\frac{^+\partial^\gamma}{\partial x^\gamma} u_\gamma (x, t) \,  = \, \frac{1}{\Gamma (m-\gamma)} \frac{d^m}{dx^m} \int_{-\infty}^x \frac{u(z, t) \, dz}{(x-z)^{\gamma - m + 1}},  \quad  m-1 < \gamma < 1, m \in \mathbb{N}, 
\label{weyldestra}
\end{align}
\begin{align}
\frac{^-\partial^\gamma}{\partial x^\gamma} u_\gamma (x, t) \, = \, \frac{(-1)^m}{\Gamma (m-\gamma)} \frac{d^m}{dx^m} \int_x^\infty \frac{u(z, t) \, dz}{(z-x)^{\gamma - m +1}}, \quad m - 1 < \gamma < m, m \in \mathbb{N}.
\label{weylsinistra}
\end{align}
In our analysis the following result on the Fourier transforms of Weyl derivatives is very important.
\begin{te}[\cite{samko}, page 137]
\label{teoremasimboloweyl}
The Fourier transforms of \eqref{weyldestra} and \eqref{weylsinistra} read
\begin{align}
\int_{-\infty}^\infty dx \, e^{i\xi x} \frac{^+\partial^\gamma}{\partial x^\gamma} u(x, t) \, = \, (-i\xi)^\gamma \, \widehat{u} (\xi, t) \, = \, |\xi |^{\gamma} e^{-\frac{i\pi \gamma}{2} \textrm{ sign} (\xi)} \, \widehat{u} (\xi, t),
\label{simboloweyldestra}
\end{align}
\begin{equation}
\int_{-\infty}^\infty dx \, e^{i\xi x} \frac{^-\partial^\gamma}{\partial x^\gamma} u(x, t) \, = \, (i\xi)^\gamma \, \widehat{u} (\xi, t) \, = \, |\xi |^{\gamma} e^{\frac{i\pi \gamma}{2} \textrm{ sign} (\xi)} \, \widehat{u} (\xi, t).
\label{simboloweylsinistra}
\end{equation}
Clearly $\widehat{u}(\xi, t)$ is the $x$-Fourier transform of $u(x, t)$.
\end{te}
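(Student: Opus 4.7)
The plan is to reduce the Fourier transform of the Weyl derivatives to the Fourier transform of a one-sided Riesz-type kernel combined with the action of $\frac{d^m}{dx^m}$ on the Fourier side. Concretely, I would rewrite the right Weyl derivative as
\begin{equation*}
\frac{^+\partial^\gamma}{\partial x^\gamma} u(x,t) \, = \, \frac{d^m}{dx^m} \bigl( k_{m-\gamma} * u(\cdot,t) \bigr)(x), \qquad k_\alpha(x) \, = \, \frac{x^{\alpha-1}}{\Gamma(\alpha)}\,\mathbf{1}_{\{x>0\}},
\end{equation*}
with $\alpha = m-\gamma \in (0,1)$, and analogously for the left derivative with kernel supported on $\{x<0\}$. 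Since the integer derivative $\frac{d^m}{dx^m}$ carries the Fourier factor $(-i\xi)^m$, the statement reduces to computing the Fourier transform of the one-sided power kernels $k_{m-\gamma}$.

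For $0<\alpha<1$ the integral $\int_0^\infty e^{i\xi x} x^{\alpha-1}\,dx$ is not absolutely convergent, so I would regularize it either by inserting a damping factor $e^{-\varepsilon x}$ and passing to $\varepsilon \downarrow 0$, or by rotating the contour. The damping trick gives
\begin{equation*}
\int_0^\infty e^{(i\xi - \varepsilon)x} x^{\alpha-1}\,dx \, = \, \frac{\Gamma(\alpha)}{(\varepsilon - i\xi)^\alpha},
\end{equation*}
and letting $\varepsilon \to 0^+$ with the principal branch one obtains $\widehat{k_\alpha}(\xi) = (-i\xi)^{-\alpha}$, where $(-i\xi)^{-\alpha} = |\xi|^{-\alpha} e^{i\pi\alpha\,\mathrm{sign}(\xi)/2}$. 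Multiplying by the factor $(-i\xi)^m$ coming from $\frac{d^m}{dx^m}$ collapses the exponents to $(-i\xi)^{m-(m-\gamma)} = (-i\xi)^\gamma$, yielding the first identity in \eqref{simboloweyldestra}. The second equality is then just the polar form $(-i\xi)^\gamma = |\xi|^\gamma e^{-i\pi\gamma\,\mathrm{sign}(\xi)/2}$.

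For the left Weyl derivative the same argument applies after the substitution $y \mapsto -y$, which turns the kernel $k_\alpha$ into its reflection and produces the complex-conjugate symbol $(i\xi)^{-\alpha}$; combined with the extra factor $(-1)^m$ in \eqref{weylsinistra} and the $(-i\xi)^m$ from the derivatives, the signs conspire to give $(i\xi)^\gamma = |\xi|^\gamma e^{i\pi\gamma\,\mathrm{sign}(\xi)/2}$, proving \eqref{simboloweylsinistra}.

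The main obstacle is the justification of the limit $\varepsilon \to 0^+$ and, more generally, the rigorous handling of the non-absolutely-convergent Fourier integral: one has to specify the class of functions $u(\cdot,t)$ (e.g.\ Schwartz functions, or at least sufficient decay together with smoothness of order $m$) so that Fubini applies to the convolution and the differentiation under the Fourier integral is permitted. Once one works in the Schwartz space (or in tempered distributions, which is the setting of Samko, Kilbas and Marichev cited in the statement), the manipulations above become routine and the identity extends by density to the broader functional classes used throughout the paper.
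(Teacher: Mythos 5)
Your proposal is correct and follows essentially the same route as the paper: both write the Weyl derivative as $\frac{d^m}{dx^m}$ applied to a convolution with the one-sided kernel $z^{m-\gamma-1}/\Gamma(m-\gamma)$, factorize the Fourier transform, and evaluate $\int_0^\infty e^{i\xi z}z^{m-\gamma-1}\,dz$ to get $\Gamma(m-\gamma)(-i\xi)^{-(m-\gamma)}$, which combined with $(-i\xi)^m$ gives $(-i\xi)^\gamma$. The only (inessential) difference is that you justify this last integral by Abel regularization with $e^{-\varepsilon z}$, whereas the paper invokes the Cauchy integral theorem following Samko et al.
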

\begin{proof}
We give a sketch of the proof of \eqref{simboloweyldestra} with some details.
\begin{align}
\int_{-\infty}^\infty dx \, e^{i\xi x} \frac{^+\partial^\gamma}{\partial x^\gamma} u(x, t) \, = \, & \int_{-\infty}^\infty dx \, e^{i\xi x} \left[ \frac{1}{\Gamma (m-\gamma)} \frac{\partial^m}{\partial x^m} \int_{-\infty}^x dz \frac{u(z, t)}{(x-z)^{\gamma - m + 1}} \right] \notag \\
= \, & \int_{-\infty}^\infty dx \, e^{i\xi x} \left[ \frac{1}{\Gamma (m-\gamma)} \int_0^\infty dz \frac{\partial^m}{\partial x^m} \frac{u(x-z,t)}{z^{\gamma - m + 1}}  \right] \notag \\
= \, & \int_{-\infty}^\infty dw \, e^{i\xi w} \frac{\partial^m}{\partial w^m} u(w, t) \; \frac{1}{\Gamma (m-\gamma)} \int_0^\infty dz \, e^{i\xi z} z^{m-\gamma - 1} \notag \\
= \, & (-i\xi)^m \int_{-\infty}^\infty e^{i\xi w} u(w, t) \, dw \; \frac{1}{\Gamma (m-\gamma)} \int_0^\infty dz \, e^{i\xi z} z^{m-\gamma - 1}.
\label{oddio}
\end{align}
The result
\begin{equation}
 \frac{(-i\xi)^m}{\Gamma (m-\gamma)} \int_0^\infty dz \, e^{i\xi z} \, z^{m-\gamma -1} \, = \, |\xi |^\gamma e^{-\frac{i\pi }{2} \textrm{ sign}(\xi)}
\end{equation}
can be obtained for example by applying the Cauchy integral Theorem (see \citet{samko} page 138).
\end{proof}

\subsection{Riesz fractional derivatives} By means of the Weyl fractional derivatives we arrive at the Riesz fractional derivative, for $m-1 < \gamma < m$, $m \in \mathbb{N}$,
\begin{align}
\frac{\partial^{\gamma}}{\partial |x|^\gamma} u(x, t) =  & -\frac{\frac{\partial^m}{\partial x^m}}{2\cos \frac{\pi \gamma}{2} \Gamma (m-\gamma)}  \left[ \int_{-\infty}^x \frac{u(y, t) \, dy}{(x-y)^{\gamma - m + 1}} +  \int_x^\infty \frac{(-1)^m \, u(y, t) \, dy}{(y-x)^{\gamma - m + 1}} \right] \notag \\
= \, & - \frac{1}{2\cos \frac{\pi \gamma}{2} } \l \frac{^+\partial^\gamma}{\partial x^\gamma} + \frac{^-\partial^\gamma}{\partial x^\gamma} \r \, u(x, t)
\end{align}
In view of Theorem \ref{teoremasimboloweyl} we have that, for $\gamma > 0$, $\gamma \notin \mathbb{N}$,
\begin{align}
\int_{-\infty}^\infty dx \, e^{i\xi x} \frac{\partial^\gamma}{\partial |x|^\gamma} u(x, t) \, = \, & - \frac{1}{2\cos \frac{\pi \gamma}{2}} \left[ |\xi |^\gamma e^{-\frac{i\pi \gamma}{2} \textrm{ sign} (\xi)} + |\xi |^\gamma e^{\frac{i\pi \gamma}{2} \textrm{ sign}(\xi)}  \right] \widehat{u}(\xi, t) \notag \\
= \, & -|\xi |^{\gamma} \, \widehat{u} (\xi, t).
\end{align}
\begin{os}
The general fractional higher-order heat equation
\begin{equation}
\frac{\partial}{\partial t} u_\gamma (x, t) \, = \, \frac{\partial^\gamma}{\partial |x|^\gamma} u_\gamma (x, t), \qquad x \in \mathbb{R}, t>0,
\end{equation}
has solution whose Fourier transform reads
\begin{equation}
\widehat{u_\gamma} (\xi, t) \, = \, e^{-t |\xi |^\gamma}.
\label{due11}
\end{equation}
For $0 < \gamma < 2$, \eqref{due11} corresponds to the characteristic function of the symmetric stable processes (this is a classical result due to M. Riesz himself).
\end{os}

\section{From pseudo random walks to fractional pseudoprocesses}
We consider in this section continuous-time pseudo random walks with steps which are pseudo random variables, that is measurable functions endowed with signed measures, and with total mass equal to one (see \cite{lachalpseudo}). In order to obtain in the limit pseudoprocesses  whose signed law satisfies higher-order fractional equations we must construct sums of the form
\begin{equation}
\sum_{j=0}^{N \l t\gamma^{-\beta (2k+1)} \r} \epsilon_j \, U_j^{2k+1} (1) \, Q^{\gamma, \beta (2k+1)}_j, \qquad \beta \in (0,1), k \in \mathbb{N}, \gamma > 0,
\label{passeggiata}
\end{equation}
where
\begin{equation}
\epsilon_j \, = \, 
\begin{cases}
1, \quad & \textrm{with probability } p, \\
-1,  &  \textrm{with probability } q,
\end{cases}
\qquad p + q = 1.
\end{equation}
The r.v.'s $Q_j^{\gamma, \beta(2k+1)}$ have probability distributions, for $\beta \in (0,1)$, $k \in \mathbb{N}$,
\begin{equation}
\Pr \ll Q_j^{\gamma, \beta(2k+1)} > w \rr \, = \, \begin{cases} \l \frac{\gamma}{w} \r^{\beta (2k+1)}, \qquad & \textrm{for } w > \gamma \\
1, & \textrm{for } w < \gamma.
\end{cases}
\end{equation}
The Poisson process $N(t)$, $t>0$, appearing in \eqref{passeggiata} is homogeneous with rate
\begin{equation}
\lambda = \frac{1}{\Gamma (1-\beta)}, \qquad \beta \in (0,1).
\end{equation}
The pseudo random variables (see \citet{lachalpseudo}) $U^{2k+1}_j (1)$ have law with Fourier transform
\begin{equation}
\int_{-\infty}^\infty dx \, e^{i\xi x} u_{2k+1} (x, 1) \, = \, e^{-i\xi^{2k+1}}
\end{equation}
and the function $u_{2k+1} (x, t)$, $x \in \mathbb{R}$, $t>0$, is the fundamental solution to the odd-order heat-type equation, for $k \in \mathbb{N}$,
\begin{align}
\begin{cases}
\frac{\partial}{\partial t} u_{2k+1} (x, t) \, = \, (-1)^k \frac{\partial^{2k+1}}{\partial x^{2k+1}} u_{2k+1} (x, t), \qquad x \in \mathbb{R}, t>0, \\
u_{2k+1} (x, 0) \, = \, \delta(x).
\end{cases}
\label{oddorderpseudoprocessesgovern}
\end{align}
There is a vast literature devoted to odd-order heat-type equations of the form \eqref{oddorderpseudoprocessesgovern}, to the behaviour of their solutions, and to the related pseudoprocesses (\citet{beghin4, lachal2003, enzolit, ecporsdov}). 

The r.v.'s and pseudo r.v.'s appearing in \eqref{passeggiata} are independent and also independent from each other. We say that two pseudo r.v.'s (or pseudoprocesses) with signed density $u_m^1$, $u_m^2$, are independent if the Fourier transform $\mathcal{F}$ of the convolution $u_m^1 * u_m^2 $ factorizes, that is
\begin{align}
\mathcal{F} \left[ u_m^1 \, * \, u_m^2 \right] (\xi) \, = \, \mathcal{F} \left[ u_m^1 \right] (\xi) \, \mathcal{F} \left[ u_m^2 \right] (\xi).
\end{align}
We are now able to state the first theorem of this section.
\begin{te}
\label{pseudowalkdispari}
The following limit in distribution holds true
\begin{align}
\lim_{\gamma \to 0} \sum_{j=0}^{N \l t\gamma^{-\beta(2k+1)} \r} \epsilon_j \, U_j^{2j+1} (1) \, Q_j^{\gamma, \beta(2k+1)} \, \stackrel{\textrm{ law }}{=} \, U_1^{2k+1} \l H_1^\beta (pt) \r - U_2^{2k+1} \l H_2^\beta (qt) \r,
\label{limitepasseggiatadispari}
\end{align}
where $H_1^\beta$ and $H_2^\beta$ are independent positively-skewed stable processes of order $0 < \beta < 1$ while $U_1^{2k+1}$ and $U_2^{2k+1}$ are independent pseudoprocesses of order $2k+1$. All the random variables $N(t)$, $t>0$, $\epsilon_j$, $Q_j^{\gamma, \beta(2k+1)}$ are independent and also independent from the pseudo random variables $U_j^{2k+1} (1)$. The Fourier transform of the limiting pseudoprocess reads
\begin{equation}
\mathbb{E}e^{i\xi U_1^{2k+1} \l H_1^\beta (pt) \r - U_2^{2k+1} \l H_2^\beta (qt) \r } \, = \, e^{-t |\xi |^{\beta(2k+1)} \l \cos \frac{\beta \pi}{2} - i \textrm{ sign}(\xi) \, (p-q) \, \sin \frac{\beta \pi}{2} \r}.
\end{equation}
\end{te}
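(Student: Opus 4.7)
The plan is to establish convergence of the characteristic (Fourier) functions, which is the natural notion of convergence in law for pseudoprocesses since their signed distributions are specified by Fourier transforms. Write $\phi(\xi)$ for the characteristic function of a single step $\epsilon_j U_j^{2k+1}(1) Q_j^{\gamma,\beta(2k+1)}$. By the mutual independence assumed in the statement and by conditioning on $N$, the compound-Poisson structure yields
\begin{equation*}
\mathbb{E}\exp\Bigl\{i\xi \sum_{j=0}^{N(t\gamma^{-\beta(2k+1)})}\epsilon_j U_j^{2k+1}(1)Q_j^{\gamma,\beta(2k+1)}\Bigr\} \,=\, \exp\bigl\{\lambda t\gamma^{-\beta(2k+1)}(\phi(\xi)-1)\bigr\},
\end{equation*}
so that the theorem reduces to computing $\lambda\gamma^{-\beta(2k+1)}(\phi(\xi)-1)$ and letting $\gamma\to 0$.

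To evaluate $\phi(\xi)$, I would condition on $Q_j^{\gamma,\beta(2k+1)}=w$ and on the sign $\epsilon_j$, and use $\widehat{u}_{2k+1}(\xi,1)=e^{-i\xi^{2k+1}}$ together with the fact that $2k+1$ is odd, so that $(w\xi)^{2k+1}=w^{2k+1}\xi^{2k+1}$. This gives
\begin{equation*}
\phi(\xi) \,=\, \beta(2k+1)\gamma^{\beta(2k+1)} \int_\gamma^\infty \bigl[p\,e^{-iw^{2k+1}\xi^{2k+1}} + q\,e^{iw^{2k+1}\xi^{2k+1}}\bigr] w^{-\beta(2k+1)-1}\,dw.
\end{equation*}
Subtracting $1$ and multiplying by $\lambda\gamma^{-\beta(2k+1)}$ removes the $\gamma$-dependent prefactor, and the substitution $y=w^{2k+1}|\xi|^{2k+1}$ factors out $|\xi|^{\beta(2k+1)}$ and reduces the exponent $2k+1$ to an integrand of $\beta$-stable type; the sign of $\xi$ only permutes $e^{iy}\leftrightarrow e^{-iy}$ via the identity $w^{2k+1}\xi^{2k+1}=\mathrm{sign}(\xi)\,y$. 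Sending $\gamma\to 0$ and invoking the classical formula
\begin{equation*}
\int_0^\infty (1-e^{\mp iy})\,y^{-\beta-1}\,dy \,=\, \frac{\Gamma(1-\beta)}{\beta}\,e^{\pm i\pi\beta/2},
\end{equation*}
obtained by integration by parts and analytic continuation of the Euler integral for $\Gamma$, together with $\lambda\Gamma(1-\beta)=1$, produces the exponent claimed for the limiting Fourier transform.

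To match this with the Fourier transform of the right-hand side of \eqref{limitepasseggiatadispari} I would condition on the two independent subordinators. Using $\widehat{u}_{2k+1}(\xi,s)=e^{-is\xi^{2k+1}}$ and extending the Laplace exponent $\mathbb{E}e^{-\mu H^\beta(t)}=e^{-t\mu^\beta}$ to purely imaginary $\mu$ by analytic continuation in the right half-plane (legitimate because $H^\beta$ is supported on $[0,\infty)$), one obtains
\begin{equation*}
\mathbb{E}e^{i\xi U_1^{2k+1}(H_1^\beta(pt))} \,=\, \mathbb{E}e^{-i\xi^{2k+1}H_1^\beta(pt)} \,=\, e^{-pt|\xi|^{\beta(2k+1)}\exp(i(\pi\beta/2)\mathrm{sign}(\xi))},
\end{equation*}
with the analogous expression for the second summand obtained by replacing $\xi$ by $-\xi$ and $p$ by $q$. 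Multiplying, using independence, reproduces exactly the exponent previously computed from the walk.

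The principal technical obstacle is the rigorous justification of the two limiting passages in the pseudo-random-variable context: first, the interchange of $\gamma\to 0$ and the Fourier integral in the single-step expression, which requires a dominated-convergence estimate of the form $|e^{\pm iw^{2k+1}\xi^{2k+1}}-1|\le\min(2,|w\xi|^{2k+1})$ together with a splitting at $w=|\xi|^{-1}$ to handle the regime $\beta(2k+1)>1$; and second, the analytic continuation of the stable Laplace exponent to the imaginary axis, where the branch of $(ia)^\beta$ must be selected consistently with $\mathrm{sign}(a)=\mathrm{sign}(\xi)$. All remaining steps are routine Fourier-analytic manipulations, provided one keeps careful track of the odd exponent $2k+1$ when propagating the sign of $\xi$.
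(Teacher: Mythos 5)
Your proposal is correct and follows essentially the same route as the paper: the compound-Poisson representation of the characteristic function, the explicit single-step integral against the Pareto-type density of $Q^{\gamma,\beta(2k+1)}$, the passage to the limit $\gamma\to 0$ via $\int_0^\infty(1-e^{\mp iy})\,y^{-\beta-1}dy=\frac{\Gamma(1-\beta)}{\beta}e^{\pm i\pi\beta/2}$ combined with $\lambda\Gamma(1-\beta)=1$, and the matching computation for the right-hand side obtained by conditioning on the two independent subordinators; your added remarks on dominated convergence and on the branch of $(ia)^{\beta}$ only make the argument more careful than the printed one. One minor observation: carrying the convention $\widehat{u}_{2k+1}(\xi,1)=e^{-i\xi^{2k+1}}$ consistently, as you do, produces the skewness term $+i\,\mathrm{sign}(\xi)\,(p-q)\sin\frac{\beta\pi}{2}$ on \emph{both} sides (equivalently, the paper's displayed formula with $p$ and $q$ interchanged), whereas the paper's intermediate step takes $\mathbb{E}e^{i\xi w U^{2k+1}(1)}=e^{+i\xi^{2k+1}w^{2k+1}}$, the opposite sign; since your two sides agree with each other, this is a sign-convention discrepancy internal to the paper and not a gap in your argument.
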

\begin{proof}
In view of the independence of the r.v's and pseudo random variables appearing in \eqref{limitepasseggiatadispari} we have that
\begin{align}
& \mathbb{E} e^{i\xi\sum_{j=0}^{N \l t\gamma^{-\beta (2k+1)} \r} \epsilon_j U^{2k+1}_j (1) \, Q_j^{\gamma, \beta(2k+1)}} \notag \\
= \, & \mathbb{E} \left[ \mathbb{E} \l e^{i\xi \epsilon U^{2k+1} (1) Q^{\gamma, \beta (2k+1)}} \r^{N \l t \gamma^{-\beta (2k+1)} \r} \right] \notag \\
= \, & \exp \ll - \frac{\lambda t}{\gamma^{\beta (2k+1)}} \l 1-\mathbb{E}e^{i\xi \epsilon U^{2k+1}(1) Q^{\gamma, \beta (2k+1)}} \r \rr \notag \\
= \, & \exp \ll - \frac{\lambda t}{\gamma^{\beta (2k+1) } }  \l 1-p \mathbb{E}e^{i\xi U^{2k+1 } (1) Q^{\gamma, \beta (2k+1)}} - q \mathbb{E}e^{-i\xi U^{2k+1} (1) Q^{\gamma, \beta (2k+1)}} \r \rr \notag \\
= \, & \exp \ll - \frac{\lambda t}{\gamma^{\beta (2k+1) } } \l p+q -p \mathbb{E}e^{i\xi U^{2k+1 } (1) Q^{\gamma, \beta (2k+1)}} - q \mathbb{E}e^{-i\xi U^{2k+1} (1) Q^{\gamma, \beta (2k+1)}}  \r  \rr \notag \\
= \, & \exp \ll - \frac{\lambda t}{\gamma^{\beta (2k+1) } } \l p \l 1-\mathbb{E}e^{i\xi U^{2k+1} (1) Q^{\gamma, \beta (2k+1)}} \r + q \l 1-\mathbb{E}e^{-i\xi U^{2k+1} (1) Q^{\gamma, \beta (2k+1)}} \r \r \rr.
\end{align}
We observe that
\begin{align}
& p \l 1-\mathbb{E}e^{i\xi U^{2k+1} (1) Q^{\gamma, \beta (2k+1)}} \r + q \l 1-\mathbb{E}e^{-i\xi U^{2k+1} (1) Q^{\gamma, \beta (2k+1)}} \r \notag \\
= \, & p \int_{\gamma}^{\infty} dw \l 1-  \frac{\gamma^{\beta (2k+1)} \beta (2k+1)}{w^{\beta (2k+1) + 1}}  e^{i\xi^{2k+1} w^{2k+1}} \r \notag \\
& + q \int_\gamma^{\infty} dw \l 1-  \frac{\gamma^{\beta (2k+1)} \beta (2k+1)}{w^{\beta (2k+1) + 1}} e^{-i\xi^{2k+1} w^{2k+1}} \r
\end{align}
and therefore
\begin{align}
& \mathbb{E} e^{i\xi\sum_{j=0}^{N \l t\gamma^{-\beta (2k+1)} \r} \epsilon_j U^{2k+1}_j (1) \, Q_j^{\gamma, \beta(2k+1)}} \, = \notag \\
= \, & \exp \ll  -\frac{\lambda t}{\gamma^{\beta (2k+1)}} \left[ p \int_{\gamma}^{\infty} dw \l 1-  \frac{\gamma^{\beta (2k+1)} \beta (2k+1)}{w^{\beta (2k+1) + 1}} e^{i\xi^{2k+1} w^{2k+1}} \r \right. \right. \notag \\
& \left. \left. + q \int_\gamma^{\infty} dw \l 1-  \frac{\gamma^{\beta (2k+1)} \beta (2k+1)}{w^{\beta (2k+1) + 1}} e^{-i\xi^{2k+1} w^{2k+1}} \r \right] \rr \notag \\
= \, & \exp \ll \frac{- \lambda t}{\gamma^{\beta (2k+1)}} \left[ p \l 1-e^{i(\xi \gamma)^{2k+1}} \r  -p \,  i \xi^{2k+1}  (2k+1) \int_\gamma^\infty \frac{ dw \, \gamma^{\beta(2k+1)}  e^{i(\xi w)^{2k+1}}}{ w^{\beta(2k+1)-2k} }   \right. \right. \notag \\  
& \left. \left. +q \l 1-e^{-i(\xi \gamma)^{2k+1}} \r  + q \, i \xi^{2k+1}  (2k+1) \int_\gamma^\infty \frac{dw \, \gamma^{\beta (2k+1)} \, e^{-i(\xi w)^{2k+1}}}{w^{\beta (2k+1) - 2k}} \right] \rr.
\end{align}
By taking the limit we get that
\begin{align}
& \lim_{\gamma \to 0} \mathbb{E} e^{i\xi\sum_{j=0}^{N \l t\gamma^{-\beta (2k+1)} \r} \epsilon_j U^{2k+1}_j (1) \, Q_j^{\gamma, \beta(2k+1)}} \, = \notag \\
 = \, &\exp \left[ -\lambda t (2k+1) \l -p  i\xi^{2k+1}  \int_0^\infty \frac{dw \, e^{i(\xi w)^{2k+1} }}{w^{\beta (2k+1) - 2k}} + q  i \xi^{2k+1}   \int_0^\infty \frac{dw \, e^{-i(\xi w)^{2k+1} }}{w^{\beta (2k+1) -2k}}   \r \right] \notag \\
 = \, & e^{-\lambda t \Gamma (1-\beta) \left[  p \l -i\xi^{2k+1} \r^\beta   + q (i\xi^{2k+1})^\beta   \right]}.
\end{align}
By setting $\lambda = \frac{1}{\Gamma (1-\beta)}$ we obtain
\begin{align}
\lim_{\gamma \to 0} \mathbb{E} e^{i\xi\sum_{j=0}^{N \l t\gamma^{-\beta (2k+1)} \r} \epsilon_j U^{2k+1}_j (1) \, Q_j^{\gamma, \beta(2k+1)}} \, = \, & e^{-t \l p |\xi |^{\beta (2k+1)} e^{-\frac{i\pi \beta}{2}\textrm{ sign}(\xi)} + q |\xi |^{\beta(2k+1)} e^{\frac{i\pi \beta}{2}\textrm{ sign}(\xi)}  \r} \notag \\
= \, & e^{-t| \xi |^{\beta(2k+1)} \l \cos \frac{\pi \beta}{2} -i \textrm{ sign}(\xi) \, (p-q) \sin \frac{\pi \beta}{2} \r}
\label{finalmente}
\end{align}
Now we consider the Fourier transform of the law of the pseudoprocess 
\begin{equation}
V^{(2k+1)\beta} (t) \, = \,  U_1^{2k+1} \l H_1^\beta (pt) \r -  \, U_2^{2k+1} \l H_2^\beta (qt) \r
\end{equation}
which reads
\begin{align}
\mathbb{E} e^{i\xi V^{(2k+1)\beta} (t)} \, = \, & \mathbb{E} e^{i\xi  U_1^{2k+1} \l H_1^\beta (pt) \r } \mathbb{E} e^{-i\xi  U_2^{2k+1} \l H_2^\beta (qt) \r } \notag \\
= \, & \l \int_0^\infty ds \, e^{i\xi^{2k+1} s} \, h_\beta^1 (s, pt) \r \l \int_0^\infty dz \, e^{-i \xi^{2k+1} z} \, h_\beta^2 (z, qt) \r \notag \\
= \, & e^{-t \, p \l -  i \xi^{2k+1} \r^\beta} e^{-t \, q \l  i\xi^{2k+1}\r^\beta} \notag \\
= \, & e^{-t \l p |\xi |^{\beta(2k+1)} e^{-\frac{i\beta \pi}{2}\textrm{ sign}(\xi)} + q |\xi |^{\beta (2k+1)} e^{\frac{i\beta \pi}{2}\textrm{ sign}(\xi)} \r} \notag \\
= \, & e^{-t |\xi |^{\beta(2k+1)} \l \cos \frac{\pi \beta}{2} -i \textrm{ sign}(\xi) \, (p-q) \sin \frac{\pi \beta}{2} \r},
\label{caratteristicapseudosimm}
\end{align}
and coincides with \eqref{finalmente}.
\end{proof}
\begin{os}
If $\beta = \frac{1}{2k+1}$ the Fourier transform \eqref{caratteristicapseudosimm} becomes
\begin{equation}
\mathbb{E} e^{i\xi  U_1^{2k+1} \l H_1^\beta (pt) \r } \mathbb{E} e^{-i\xi  U_2^{2k+1} \l H_2^\beta (qt) \r } \, = \, e^{-t|\xi| \cos \frac{\pi}{2(2k+1)} + i t \xi \sin \frac{\pi}{2(2k+1)}}
\end{equation}
which corresponds to the characteristic function of a Cauchy r.v. with position parameter equal to $t (p-q) \sin \frac{\pi}{2(2k+1)}$ and scale parameter $t \cos \frac{\beta}{2(2k+1)}$. This slightly generalizes result 1.4 of \citet{ecporsdov}.
\end{os}

For even-order pseudoprocesses we have the following limit in distribution.
\begin{te}
\label{pseudowalkpari}
If $U^{2k}(t)$, $t>0$, is an even-order pseudoprocess and $N(t)$, $t>0$, is a homogeneous Poisson process, independent from $U^{2k}(t)$, $t>0$, we have that
\begin{align}
\lim_{\gamma \to 0} \sum_{j=0}^{N\l t \gamma^{-2k\beta} \r} U_j^{2k}(1)Q_j^{\gamma, 2k\beta} \, \stackrel{\textrm{ law }}{=} \, U^{2k} \l H^\beta (t) \r, \qquad t>0,
\end{align}
where $H^\beta$ is a stable subordinator of order $\beta \in (0,1)$ and $Q_j^{\gamma, 2k\beta}$ are i.i.d. random variables with distribution
\begin{align}
\Pr \ll Q_j^{\gamma, 2k\beta} > w \rr \, = \, \begin{cases}
1, \qquad & w < \gamma, \\
\l \frac{\gamma}{w} \r^{2k\beta}, & w > \gamma.
\end{cases}
\end{align}
The pseudoprocess $U^{2k}(t)$ is governed by the equation
\begin{equation}
\frac{\partial}{\partial t}u_{2k}(x, t) \, = \, (-1)^{k+1}  \frac{\partial^{2k}}{\partial x^{2k}}  u_{2k} (x, t), \qquad x \in \mathbb{R}.
\end{equation}
\end{te}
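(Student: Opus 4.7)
The plan is to mirror the argument of Theorem~\ref{pseudowalkdispari}, but two features of the even-order setting make the computation cleaner. Since the Fourier symbol $\mathbb{E}e^{i\xi U^{2k}(1)} = e^{-|\xi|^{2k}}$ is real-valued and even in $\xi$, there is no need to introduce the signs $\epsilon_j$, and only a single subordinated pseudoprocess appears in the limit rather than two. Conditioning on $N$ and exploiting the independence of the summands, I would write
\begin{equation*}
\mathbb{E}\exp\Bigl\{i\xi\sum_{j=0}^{N(t\gamma^{-2k\beta})} U_j^{2k}(1)\,Q_j^{\gamma,2k\beta}\Bigr\}
= \exp\Bigl\{-\frac{\lambda t}{\gamma^{2k\beta}}\bigl(1-\mathbb{E}e^{i\xi U^{2k}(1)\,Q^{\gamma,2k\beta}}\bigr)\Bigr\}.
\end{equation*}

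Conditioning on $Q^{\gamma,2k\beta}$ first collapses the inner expectation to $\mathbb{E}e^{-|\xi|^{2k}(Q^{\gamma,2k\beta})^{2k}}$, and inserting the Pareto density $f_Q(w) = 2k\beta\gamma^{2k\beta}w^{-2k\beta-1}$ on $(\gamma,\infty)$, together with the fact that $f_Q$ integrates to one, gives
\begin{equation*}
\frac{\lambda t}{\gamma^{2k\beta}}\bigl(1-\mathbb{E}e^{i\xi U^{2k}(1)Q^{\gamma,2k\beta}}\bigr)
= 2k\beta\lambda t\int_\gamma^\infty\frac{1-e^{-|\xi|^{2k}w^{2k}}}{w^{2k\beta+1}}\,dw.
\end{equation*}
The factor $\gamma^{2k\beta}$ from the density cancels the prefactor $\gamma^{-2k\beta}$, so I would pass to the limit $\gamma\to 0$ inside the integral (the integrand is bounded near zero by $|\xi|^{2k}w^{2k(1-\beta)-1}$, integrable since $\beta<1$, and at infinity by $w^{-2k\beta-1}$). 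The substitution $u = |\xi|^{2k}w^{2k}$ followed by one integration by parts then reduces the integral to $|\xi|^{2k\beta}\Gamma(1-\beta)/(2k\beta)$, so that with $\lambda = 1/\Gamma(1-\beta)$ the whole exponent collapses to $-t|\xi|^{2k\beta}$.

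Finally, I would identify this limit with the Fourier transform of $U^{2k}(H^\beta(t))$ by conditioning on the subordinator and using the Laplace transform $\mathbb{E}e^{-sH^\beta(t)} = e^{-ts^\beta}$, which yields $\int_0^\infty e^{-s|\xi|^{2k}}h_\beta(s,t)\,ds = e^{-t|\xi|^{2k\beta}}$ and agrees with the limiting characteristic exponent obtained above. The only mildly delicate point is the dominated-convergence step used to extend the truncated integral to $(0,\infty)$; everything else is a direct calculation, noticeably cleaner than in the odd-order proof where the signed decomposition and the Cauchy-integral evaluation of the oscillatory integrals $\int_0^\infty e^{\pm i(\xi w)^{2k+1}}w^{2k-\beta(2k+1)}\,dw$ were both essential.
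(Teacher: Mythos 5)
Your proposal is correct and follows essentially the same route as the paper: condition on the Poisson process to get the compound-Poisson exponent, reduce the inner expectation to the Pareto integral $2k\beta\lambda t\int_\gamma^\infty\bigl(1-e^{-|\xi|^{2k}w^{2k}}\bigr)w^{-2k\beta-1}\,dw$, identify the limit as $\lambda t\,\Gamma(1-\beta)|\xi|^{2k\beta}$, and match it with $\int_0^\infty e^{-s|\xi|^{2k}}h_\beta(s,t)\,ds=e^{-t|\xi|^{2k\beta}}$. The only cosmetic difference is the order of operations: the paper integrates by parts at finite $\gamma$ and then disposes of the boundary term $(1-e^{-|\xi|^{2k}\gamma^{2k}})\gamma^{-2k\beta}\to 0$, whereas you pass to the limit in the truncated integral first (legitimately, by monotone/dominated convergence) and integrate by parts afterwards.
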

\begin{proof}
We start by evaluating the Fourier transform
\begin{align}
& \mathbb{E}e^{i\xi \sum_{j=0}^{N\l t \gamma^{-2k\beta} \r} U_j^{2k}(1)Q_j^{\gamma, 2k\beta}} \notag \\
 = \, & \mathbb{E} \left[ \mathbb{E}\l e^{i\xi U^{2k}(1)Q^{\gamma, 2k\beta}} \r^{N \l t\gamma^{-2k\beta} \r} \right] \notag \\
= \, &\exp \ll -\frac{\lambda t}{\gamma^{2k\beta}} \l 1-\mathbb{E}e^{i\xi U^{2k}(1)Q^{\gamma, 2k\beta}} \r \rr \notag \\
= \, & \exp \ll -\frac{\lambda t}{\gamma^{2k\beta}} \int_\gamma^\infty dy \l 1-e^{-|\xi |^{2k}y^{2k}} \r \rr \frac{2k\beta \gamma^{2k\beta}}{y^{2k\beta + 1}} \notag \\
= \, & \exp \ll -\frac{\lambda t}{\gamma^{2k\beta }} \left[ \l 1-e^{-|\xi |^{2k}\gamma^{2k}} \r +  \int_\gamma^{\infty} dy \, e^{-|\xi |^{2k}y^{2k}} y^{2k-1-2k\beta} \, 2k \gamma^{2k\beta}  \right]  \rr
\end{align}
By taking the limit we have that
\begin{align}
\lim_{\gamma \to 0} e^{i\xi \sum_{j=0}^{N\l t \gamma^{-2k\beta} \r} U_j^{2k}(1)Q_j^{\gamma, 2k\beta}} \, = \, & e^{-\lambda t |\xi |^{2k} 2k \int_0^\infty e^{-|\xi |^{2k}y^{2k}}y^{2k(1-\beta )-1} dy} \notag \\
= \, & e^{-\lambda t |\xi |^{2k\beta} \int_0^\infty e^{-w} w^{-\beta} dw} \notag \\
= \, & e^{-\lambda t |\xi |^{2k\beta}\Gamma (1-\beta)}
\end{align}
which coincides with
\begin{align}
\mathbb{E}e^{i\xi U^{2k} \l H^\beta (t) \r} \, = \, \int_0^\infty e^{-s\xi^{2k}} \Pr \ll H^\beta (t) \in ds \rr \, = \, e^{-t |\xi |^{2k\beta}}
\end{align}
since $\lambda = \frac{1}{\Gamma (1-\beta)}$.
\end{proof}
\begin{os}
For $\beta = \frac{1}{k}$ the composition $U^{2k} \l H^\beta (t) \r$  has Gaussian distribution. For $\beta = \frac{1}{2k}$ we have instead the Cauchy distribution and for $\beta = \frac{1}{4k}$ we extract the inverse Gaussian corresponding to the distribution of the first passage time of a Brownian motion. The case $\beta = \frac{1}{6k}$ yields the stable law with distribution
\begin{align}
f_{\frac{1}{3}} (x) \, = \, \frac{t}{x\sqrt[3]{3x}} \textrm{ Ai}\l \frac{t}{\sqrt[3]{3x}} \r
\end{align}
where Ai denotes the Airy function (see \cite{ecporsdov}).
\end{os}
In order to arrive at asymmetric higher-order fractional pseudoprocesses we construct pseudo random walks by adapting the Feller approach (used for asymmetric stable laws) to our context.
This means that we combine independent pseudo random walks with suitable trigonometric weights as in \eqref{14}.
\begin{te}
\label{teoremarwfellerdispari}
Let $X_j^{\gamma, (2k+1)\beta}$ and $Y_j^{\gamma, (2k+1)\beta}$ be i.i.d. r.v.'s with distribution
\begin{align}
\Pr \ll X^{\gamma, (2k+1)\beta} > w \rr \, = \, 
\begin{cases}
\l \frac{\gamma}{w} \r^{(2k+1)\beta}, \qquad & w > \gamma \\
1, & w < \gamma,
\end{cases}
\end{align}
and let $U^{2k+1}(t)$, $t>0$, be a pseudoprocess of odd-order $2k+1$, $k \in \mathbb{N}$. For $0 < \beta < 1$ and $-\beta < \theta < \beta$ we have that
\begin{align}
 \lim_{\gamma \to 0} & \left[  \l \frac{\sin \frac{\pi}{2}(\beta - \theta)}{\sin \pi \beta} \r^{\frac{1}{(2k+1)\beta}} \sum_{j=0}^{N \l t \gamma^{-(2k+1)\beta} \r} X_j^{\gamma, (2k+1)\beta} U_j^{2k+1} (1) \right. \notag \\
&  \left.  - \l \frac{\sin \frac{\pi}{2}(\beta + \theta)}{\sin \pi \beta} \r^{\frac{1}{(2k+1)\beta}} \sum_{j=0}^{N \l t \gamma^{-(2k+1)\beta} \r} Y_j^{\gamma, (2k+1)\beta} U_j^{2k+1} (1) \right] \stackrel{\textrm{law}}{=} \, & Z^{\beta (2 k+1), \theta}
\label{mammamiabella}
\end{align}
where
\begin{equation}
\mathbb{E}e^{i\xi Z^{\beta (2 k+1), \theta}} \, = \, e^{-t |\xi |^{(2k+1)\beta} e^{\frac{i\pi \theta}{2}}}
\end{equation}
\end{te}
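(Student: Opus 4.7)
My plan is to mirror the argument of Theorem \ref{pseudowalkdispari} and then combine the two rescaled pseudo random walks via a trigonometric identity. Since the two compound sums appearing in \eqref{mammamiabella} are built from independent families, their Fourier transforms factorise, so it suffices to analyse one rescaled sum at a time and then multiply. For the single sum $\sum_{j=0}^{N(t\gamma^{-(2k+1)\beta})} X_j^{\gamma,(2k+1)\beta} U_j^{2k+1}(1)$, conditioning on the Poisson process and integrating against the Pareto density of $X^{\gamma,(2k+1)\beta}$ yields exactly the $\gamma\to 0$ calculation performed inside Theorem \ref{pseudowalkdispari} with $p=1$, $q=0$; with $\lambda = 1/\Gamma(1-\beta)$ this produces the limit Fourier transform
\begin{equation*}
\lim_{\gamma\to 0}\mathbb{E}\,e^{i\eta \sum_{j=0}^{N(t\gamma^{-(2k+1)\beta})} X_j^{\gamma,(2k+1)\beta} U_j^{2k+1}(1)} \, = \, e^{-t(-i\eta^{2k+1})^\beta} \, = \, e^{-t|\eta|^{(2k+1)\beta} e^{-\frac{i\pi\beta}{2}\operatorname{sign}(\eta)}},
\end{equation*}
where I use $\operatorname{sign}(\eta^{2k+1}) = \operatorname{sign}(\eta)$ and the principal branch identity $(-iz)^\beta = |z|^\beta e^{-i\pi\beta\operatorname{sign}(z)/2}$ for real $z$.

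I then substitute $\eta = a_-\xi$ with $a_- = \bigl(\sin\tfrac{\pi}{2}(\beta-\theta)/\sin\pi\beta\bigr)^{1/((2k+1)\beta)}$ in the first sum and $\eta = -a_+\xi$ with $a_+ = \bigl(\sin\tfrac{\pi}{2}(\beta+\theta)/\sin\pi\beta\bigr)^{1/((2k+1)\beta)}$ in the second (the extra minus sign in \eqref{mammamiabella} is absorbed into $\eta$, which flips $\operatorname{sign}(\eta)$ and turns the phase into its conjugate). Both $a_\pm$ are well defined positive reals because $-\beta<\theta<\beta$ keeps $\sin\tfrac{\pi}{2}(\beta\pm\theta)>0$. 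Multiplying the two resulting limit exponents collapses $a_\pm^{(2k+1)\beta}$ to the Feller trigonometric coefficients and yields the combined exponent
\begin{equation*}
-t|\xi|^{(2k+1)\beta}\left[\frac{\sin\frac{\pi}{2}(\beta-\theta)}{\sin\pi\beta}\, e^{-\frac{i\pi\beta}{2}\operatorname{sign}(\xi)} + \frac{\sin\frac{\pi}{2}(\beta+\theta)}{\sin\pi\beta}\, e^{+\frac{i\pi\beta}{2}\operatorname{sign}(\xi)}\right].
\end{equation*}
Using $\sin\tfrac{\pi(\beta+\theta)}{2}+\sin\tfrac{\pi(\beta-\theta)}{2} = 2\sin\tfrac{\pi\beta}{2}\cos\tfrac{\pi\theta}{2}$ and $\sin\tfrac{\pi(\beta+\theta)}{2}-\sin\tfrac{\pi(\beta-\theta)}{2} = 2\cos\tfrac{\pi\beta}{2}\sin\tfrac{\pi\theta}{2}$, combined with $\sin\pi\beta = 2\sin\tfrac{\pi\beta}{2}\cos\tfrac{\pi\beta}{2}$, the bracket telescopes to $\cos\tfrac{\pi\theta}{2} + i\operatorname{sign}(\xi)\sin\tfrac{\pi\theta}{2} = e^{i\pi\theta\operatorname{sign}(\xi)/2}$, which matches the claimed Fourier transform of $Z^{\beta(2k+1),\theta}$.

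The genuine analytic work, and the place where I would expect the only real obstacle, is the $\gamma\to 0$ analysis of the Pareto-truncated oscillatory integral entering the characteristic exponent of each single sum; but this step is already carried out inside the proof of Theorem \ref{pseudowalkdispari}, so my task reduces to a careful bookkeeping of the principal branch of $(\mp i z)^\beta$ under the sign change $\eta \mapsto -\eta$ and to the algebraic simplification of the Feller trigonometric combination.
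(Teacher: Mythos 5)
Your proposal is correct and follows essentially the same route as the paper: factorise the characteristic function of the two compound Poisson sums by independence, compute each single-sum limit via the Pareto-truncated oscillatory integral (which you rightly identify as the $p=1$, $q=0$ case of Theorem \ref{pseudowalkdispari}, exactly the computation the paper repeats verbatim with $\xi$ rescaled by the trigonometric factors), and then collapse the combined exponent to $e^{\frac{i\pi\theta}{2}\operatorname{sign}(\xi)}$ by the same product-to-sum identities the paper uses in exponential form. The only cosmetic difference is that you reuse the earlier limit computation by substitution rather than rederiving it, which is a legitimate and slightly cleaner bookkeeping of the same argument.
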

\begin{proof}
The Fourier transform of \eqref{mammamiabella} is written as
\begin{align}
&\mathbb{E}e^{i\xi \l \frac{\sin \frac{\pi}{2}(\beta - \theta)}{\sin \pi \beta} \r^{\frac{1}{(2k+1)\beta}} \sum_{j=0}^{N \l t \gamma^{-(2k+1)\beta} \r} X_j^{\gamma, (2k+1)\beta} U_j^{2k+1} (1)} \notag \\
\times \, & \mathbb{E}e^{-i\xi \l \frac{\sin \frac{\pi}{2}(\beta + \theta)}{\sin \pi \beta} \r^{\frac{1}{(2k+1)\beta}} \sum_{j=0}^{N \l t \gamma^{-(2k+1)\beta} \r} Y_j^{\gamma, (2k+1)\beta} U_j^{2k+1} (1)}
\end{align}
where the first member is given by
\begin{align}
&\mathbb{E}e^{i\xi \l \frac{\sin \frac{\pi}{2}(\beta - \theta)}{\sin \pi \beta} \r^{\frac{1}{(2k+1)\beta}} \sum_{j=0}^{N \l t \gamma^{-(2k+1)\beta} \r} X_j^{\gamma, (2k+1)\beta} U_j^{2k+1} (1)} \, = \notag \\
= \, & \exp \ll -\frac{\lambda t}{\gamma^{(2k+1)\beta}} \l 1-\mathbb{E}e^{i\xi \l \frac{\sin \frac{\pi}{2}(\beta - \theta)}{\sin \pi \beta} \r^{\frac{1}{(2k+1)\beta}} U^{2k+1}(1) X^{(2k+1)\beta}} \r \rr \notag \\
= \, & \exp \ll - \frac{\lambda t}{\gamma^{(2k+1)\beta}} \int_\gamma^\infty \l 1-e^{i\xi^{2k+1} \l \frac{\sin \frac{\pi}{2}(\beta - \theta)}{\sin \pi \beta} \r^{\frac{1}{\beta}} y^{2k+1}} \r \frac{\gamma^{(2k+1)\beta}}{y^{(2k+1)\beta +1}} (2k+1)\beta  \rr \notag \\ 
\stackrel{\gamma \to 0}{\longrightarrow} \, & \exp \ll -\lambda t \, i^{-\beta} \, \xi^{(2k+1)\beta} \l \frac{\sin \frac{\pi}{2} (\beta - \theta)}{\sin \pi\beta} \r \int_0^\infty e^{-w} w^{-\beta }  dw\rr \notag \\
= \, & e^{-\lambda t |\xi |^{(2k+1)\beta} e^{-\frac{i\pi \beta \textrm{ sign}(\xi)}{2}}\l \frac{\sin \frac{\pi}{2} (\beta - \theta)}{\sin \pi\beta} \r \Gamma (1-\beta)} .
\end{align}
The second member of \eqref{mammamiabella} becomes, by performing a similar calculation,
\begin{align}
&\mathbb{E}e^{-i\xi \l \frac{\sin \frac{\pi}{2}(\beta + \theta)}{\sin \pi \beta} \r^{\frac{1}{(2k+1)\beta}} \sum_{j=0}^{N \l t \gamma^{-(2k+1)\beta} \r} Y_j^{\gamma, (2k+1)\beta} U_j^{2k+1} (1)} \notag \\
 \stackrel{\gamma \to 0}{\longrightarrow} \, & e^{-\lambda t |\xi |^{(2k+1)\beta} e^{\frac{i\pi \beta \textrm{ sign}(\xi)}{2}}\l \frac{\sin \frac{\pi}{2} (\beta + \theta)}{\sin \pi\beta} \r \Gamma (1-\beta)}.
\end{align}
and thus for $\lambda = \frac{1}{\Gamma (1-\beta)}$ we obtain that
\begin{align}
& \mathbb{E}e^{i\xi \l \frac{\sin \frac{\pi}{2}(\beta - \theta)}{\sin \pi \beta} \r^{\frac{1}{(2k+1)\beta}} \sum_{j=0}^{N \l t \gamma^{-(2k+1)\beta} \r} X_j^{\gamma, (2k+1)\beta} U_j^{2k+1} (1)} \notag \\
\times & \mathbb{E}e^{-i\xi \l \frac{\sin \frac{\pi}{2}(\beta + \theta)}{\sin \pi \beta} \r^{\frac{1}{(2k+1)\beta}} \sum_{j=0}^{N \l t \gamma^{-(2k+1)\beta} \r} Y_j^{\gamma, (2k+1)\beta} U_j^{2k+1} (1)} \notag \\
\stackrel{\gamma \to 0}{\longrightarrow} \, & e^{- t |\xi |^{(2k+1)\beta} e^{-\frac{i\pi \beta \textrm{ sign}(\xi)}{2}}\l \frac{\sin \frac{\pi}{2} (\beta - \theta)}{\sin \pi\beta} \r} e^{- t |\xi |^{(2k+1)\beta} e^{\frac{i\pi \beta \textrm{ sign}(\xi)}{2}}\l \frac{\sin \frac{\pi}{2} (\beta + \theta)}{\sin \pi\beta} \r} \notag \\
= \, & e^{-t|\xi |^{(2k+1)\beta} e^{\frac{i \pi \theta}{2} \textrm{ sign} (\xi)}}
\end{align}
\end{proof}
By considering symmetric pseudo random walks with the Feller construction we arrive in the next theorem at symmetric pseudoprocesses with time scale equal to $\frac{\cos \frac{\pi \beta}{2}}{\sin \frac{\pi \beta}{2}}$, $0<\beta<1$, $-\beta < \theta < \beta$.
\begin{te}
Let $X_j^{\gamma, 2\beta k}$ and $Y_j^{\gamma, 2\beta k}$ be i.i.d. r.v.'s with distribution
\begin{align}
\Pr \ll X^{\gamma, 2\beta k} > w \rr \, = \, 
\begin{cases}
\l \frac{\gamma}{w} \r^{2\beta k}, \qquad & w > \gamma \\
1, & w < \gamma,
\end{cases}
\end{align}
and let $U^{2k}(t)$, $t>0$, be a pseudoprocess of order $2k$, $k \in \mathbb{N}$. If $N(t)$ is a homogeneous Poisson process, with parameter $\lambda = \frac{1}{\Gamma (1-\beta)}$, independent from $X_j^{\gamma, 2\beta k}$ and $Y_j^{\gamma, 2\beta k}$ we have that
\begin{align}
\lim_{\gamma \to 0} & \left[ \l \frac{\sin \frac{\pi}{2}(\beta - \theta)}{\sin \pi \beta} \r^{\frac{1}{2\beta k}} \sum_{j=0}^{N \l t \gamma^{-2\beta k} \r} X_j^{\gamma, 2\beta k} \, U_j^{2k} (1) \, \right.  \notag \\
& \left. + \l \frac{\sin \frac{\pi}{2}(\beta + \theta)}{\sin \pi \beta} \r^{\frac{1}{2\beta k}} \sum_{j=0}^{N \l t \gamma^{-2\beta k} \r} Y_j^{\gamma, 2\beta k} \, U_j^{2k} (1) \right] \,  \stackrel{ \textrm{ law }}{=} \, Z^{2k\beta, \theta}, \qquad t >0,
\label{323}
\end{align}
for $0< \beta < 1$ and $-\beta < \theta < \beta$ and
\begin{align}
\mathbb{E}e^{i\xi Z^{2k\beta, \theta}} \, = \, e^{-t|\xi |^{2k\beta} \frac{\cos \frac{\pi}{2}\theta}{\cos \frac{\pi}{2}\beta}}
\end{align}
\end{te}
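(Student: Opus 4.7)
The plan is to follow the template of Theorem~\ref{pseudowalkpari} for each Pareto-scaled sum separately, and then combine the two resulting limits via trigonometric identities. Since the $X$- and $Y$-walks in \eqref{323} are independent, the joint Fourier transform factorizes as a product of two characteristic functions, each of which has precisely the structure analysed in Theorem~\ref{pseudowalkpari}.

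First I would compute, for a generic positive scale $c>0$,
\[
\mathbb{E}\exp \ll i\xi c \sum_{j=0}^{N\l t \gamma^{-2\beta k} \r} U_j^{2k}(1) X_j^{\gamma, 2\beta k} \rr \, = \, \exp \ll -\frac{\lambda t}{\gamma^{2\beta k}} \l 1-\mathbb{E}e^{i\xi c U^{2k}(1) X^{\gamma, 2\beta k}} \r \rr.
\]
Since $U^{2k}$ is of even order, $\mathbb{E}e^{i\xi c U^{2k}(1) y} = e^{-|c\xi|^{2k} y^{2k}}$ is insensitive to the sign of $y$, so both factors in \eqref{323} can be treated identically and the $\textrm{sign}(\xi)$--cancellations that arise in the odd-order case of Theorem~\ref{teoremarwfellerdispari} do not intervene. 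Integrating against the Pareto density of $X^{\gamma, 2\beta k}$ and carrying out the same integration by parts followed by the substitution $w = |c\xi|^{2k} y^{2k}$ as in the proof of Theorem~\ref{pseudowalkpari}, each factor converges as $\gamma \to 0$ to
\[
\exp \ll -\lambda t \, |c\xi|^{2k\beta} \, \Gamma(1-\beta) \rr \, = \, \exp \ll -t |\xi|^{2k\beta} c^{2k\beta} \rr,
\]
after setting $\lambda = 1/\Gamma(1-\beta)$.

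Applying this with $c^{2k\beta} = \sin\frac{\pi}{2}(\beta-\theta)/\sin \pi\beta$ for the $X$-walk and with $c^{2k\beta} = \sin\frac{\pi}{2}(\beta+\theta)/\sin\pi\beta$ for the $Y$-walk (both positive thanks to the assumption $-\beta < \theta < \beta$, so that the real $2k\beta$-th roots used to define the two scale constants are well-defined), the joint Fourier transform converges to
\[
\exp \ll -t|\xi|^{2k\beta} \, \frac{\sin \frac{\pi}{2}(\beta-\theta) + \sin \frac{\pi}{2}(\beta+\theta)}{\sin \pi\beta} \rr.
\]
The closing step is a trigonometric simplification: using the sum-to-product identity $\sin\frac{\pi}{2}(\beta-\theta) + \sin\frac{\pi}{2}(\beta+\theta) = 2\sin\frac{\pi\beta}{2}\cos\frac{\pi\theta}{2}$ together with the double-angle identity $\sin\pi\beta = 2\sin\frac{\pi\beta}{2}\cos\frac{\pi\beta}{2}$, the coefficient collapses to $\cos\frac{\pi\theta}{2}/\cos\frac{\pi\beta}{2}$, yielding the Fourier transform claimed in the statement. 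I do not expect a genuine obstacle; the only delicate point is the justification of passing the $\gamma \to 0$ limit inside the integral, which is carried out exactly as in Theorem~\ref{pseudowalkpari}, and everything else reduces to elementary trigonometry.
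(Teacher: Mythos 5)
Your proposal is correct and follows essentially the same route as the paper: factorize the Fourier transform into the two Pareto-scaled sums, compute each factor exactly as in Theorem \ref{pseudowalkpari} with the scale constant $c^{2k\beta}=\sin\frac{\pi}{2}(\beta\mp\theta)/\sin\pi\beta$, and collapse the sum of the two exponents via $\sin\frac{\pi}{2}(\beta-\theta)+\sin\frac{\pi}{2}(\beta+\theta)=2\sin\frac{\pi\beta}{2}\cos\frac{\pi\theta}{2}$ and $\sin\pi\beta=2\sin\frac{\pi\beta}{2}\cos\frac{\pi\beta}{2}$. The paper's proof is the same computation, including the same implicit factorization of the joint characteristic function into a product of the two marginal ones.
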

\begin{proof}
The Fourier transform of \eqref{323} is written as
\begin{align}
 \mathbb{E}e^{i\xi \l \frac{\sin \frac{\pi}{2}(\beta - \theta)}{\sin \pi \beta} \r^{\frac{1}{2\beta k}} \sum_{j=0}^{N \l t \gamma^{-2\beta k} \r} X_j \, U_j^{2k} (1)} \; \mathbb{E}e^{i\xi \l \frac{\sin \frac{\pi}{2}(\beta + \theta)}{\sin \pi \beta} \r^{\frac{1}{2\beta k}} \sum_{j=0}^{N \l t \gamma^{-2\beta k} \r} Y_j \, U_j^{2k} (1)}
 \end{align}
 where the first member is given by
 \begin{align}
& \mathbb{E}e^{i\xi \l \frac{\sin \frac{\pi}{2}(\beta - \theta)}{\sin \pi \beta} \r^{\frac{1}{2\beta k}} \sum_{j=0}^{N \l t \gamma^{-2\beta k} \r} X_j^{\gamma, 2\beta k} \, U_j^{2k} (1)} = \notag \\
= \, &  \exp \ll -\frac{\lambda t}{\gamma^{2k\beta}} \left[ 1-\mathbb{E}e^{i\xi \frac{\sin \frac{\pi}{2}(\beta - \theta)}{\sin \pi \beta} U^{2k}(1)X^{\gamma, 2\beta k}} \right] \rr \notag \\
= \, & \exp \ll -\frac{\lambda t}{\gamma^{2k\beta}} \left[ \int_\gamma^\infty e^{-\left| \xi \l \frac{\sin \frac{\pi}{2} (\beta - \theta)}{\sin \pi\beta} \r^{\frac{1}{2k\beta}} \right|^{2k} y^{2k}} (2k\beta) \frac{\gamma^{2k\beta}}{y^{2k\beta +1}} dy \right] \rr \notag \\
\stackrel{\gamma \to 0}{\longrightarrow} \, & \exp \ll -\lambda t |\xi |^{2k} \l \frac{\sin \frac{\pi}{2}(\beta - \theta)}{\sin \pi\beta} \r^{\frac{1}{\beta}} 2k \int_0^\infty e^{-| \xi |^{2k} \l \frac{\sin \frac{\pi}{2}(\beta - \theta)}{\sin \pi\beta} \r^{\frac{1}{\beta}}  y^{2k}} y^{2k-1-2k\beta} dy \rr \notag \\
= \, & e^{-\lambda t |\xi |^{2k\beta} \Gamma (1-\beta) \left[ \frac{\sin \frac{\pi}{2}(\beta - \theta)}{\sin \pi \beta}\right] } 
\end{align}
and by similar calculations the second member becomes
\begin{align}
\mathbb{E}e^{i\xi \l \frac{\sin \frac{\pi}{2}(\beta + \theta)}{\sin \pi \beta} \r^{\frac{1}{2\beta k}} \sum_{j=0}^{N \l t \gamma^{-2\beta k} \r} Y_j^{\gamma, 2\beta k} \, U_j^{2k} (1)} \, \stackrel{\gamma \to 0}{\longrightarrow}  \, e^{-\lambda t |\xi |^{2k\beta} \Gamma (1-\beta) \left[ \frac{\sin \frac{\pi}{2}(\beta + \theta)}{\sin \pi \beta}\right] } .
\end{align}
Thus we have that
\begin{align}
& \mathbb{E}e^{i\xi \l \frac{\sin \frac{\pi}{2}(\beta - \theta)}{\sin \pi \beta} \r^{\frac{1}{2\beta k}} \sum_{j=0}^{N \l t \gamma^{-2\beta k} \r} X_j^{\gamma, 2\beta k} \, U_j^{2k} (1)} \; \mathbb{E}e^{i\xi \l \frac{\sin \frac{\pi}{2}(\beta + \theta)}{\sin \pi \beta} \r^{\frac{1}{2\beta k}} \sum_{j=0}^{N \l t \gamma^{-2\beta k} \r} Y_j^{\gamma, 2\beta k} \, U_j^{2k} (1)} \notag \\
\stackrel{\gamma \to 0}{\longrightarrow} \, &  e^{-\lambda t |\xi |^{2k\beta} \Gamma (1-\beta) \left[ \frac{\sin \frac{\pi}{2}(\beta - \theta)}{\sin \pi \beta} + \frac{\sin \frac{\pi}{2}(\beta + \theta)}{\sin \pi \beta}\right] } \notag \\
 = \, & e^{-t |\xi |^{2\beta k} \frac{\cos \frac{\pi}{2}\theta}{\cos \frac{\pi}{2}\beta}}
\end{align}
\end{proof}

\section{Governing equations}
In the previous section we obtained fractional pseudoprocesses as limit of suitable pseudo random walks. In this section we will show that the limiting fractional pseudoprocesses obtained before have signed density satisfying space-fractional heat-type equations of higher-order with Riesz or Feller fractional derivatives. The order of fractionality of the governing equations is a positive real number and this is the major difference with respect to the pseudoprocesses considered so far in the literature.

We start by examining space fractional higher-order equations of order $2k\beta$, $\beta \in (0,1)$, $k \in \mathbb{N}$, which interpolate equations of the form \eqref{11}.
\begin{te}
The solution to the initial-value problem
\begin{align}
\begin{cases}
\frac{\partial}{\partial t} v_{2k}^\beta (x, t) \, = \, \frac{\partial^{2k\beta}}{\partial |x|^{2k\beta}} v_{2k}^\beta (x, t), \qquad x \in \mathbb{R}, t>0, k \in \mathbb{N}, \beta \in (0,1) \\
v_{2k}^\beta (x, 0) \, = \, \delta (x)
\end{cases}
\label{problemapari}
\end{align}
can be written as
\begin{align}
v_{2k}^\beta (x, t) \, = \, & \frac{1}{\pi x} \mathbb{E} \left[ \sin \l x G^{2k} \l \frac{1}{H^\beta (t)} \r \r \right] \notag \\
= \, & \frac{1}{\pi x} \mathbb{E} \left[ \sin \l x G^{2k\beta} \l \frac{1}{t} \r \r \right]
\label{rappresentpari}
\end{align}
and coincides with the law of the pseudoprocess
\begin{align}
V^{2k\beta} (t) \, = \, U^{2k} \l H^\beta (t) \r, \qquad t>0,
\end{align}
where $U^{2k}$ is related to equation \eqref{11} for $m=2k$ and $H^\beta$ is a stable subordinator independent from $U^{2k}$. $G^\gamma \l t \r$ is a gamma r.v. with density
\begin{equation}
g^\gamma (x, t) \, = \, \gamma \frac{x^{\gamma -1}}{t} e^{-\frac{x^\gamma}{t}}, \qquad x>0, t>0, \gamma >0.
\end{equation}
\end{te}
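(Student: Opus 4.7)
The plan is to pass everything to the Fourier side, solve the resulting scalar ODE, and then invert explicitly against the gamma density to recognize the probabilistic representation.

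First I would apply the $x$--Fourier transform to the Cauchy problem \eqref{problemapari}. Using the symbol of the Riesz derivative computed in the preliminaries, namely $\int e^{i\xi x}\tfrac{\partial^{2k\beta}}{\partial|x|^{2k\beta}}u\,dx=-|\xi|^{2k\beta}\widehat u(\xi,t)$, the PDE reduces to the ODE $\partial_t \widehat v_{2k}^\beta(\xi,t)=-|\xi|^{2k\beta}\widehat v_{2k}^\beta(\xi,t)$ with $\widehat v_{2k}^\beta(\xi,0)=1$, whose unique solution is $\widehat v_{2k}^\beta(\xi,t)=e^{-t|\xi|^{2k\beta}}$. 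Identifying this with the law of $V^{2k\beta}(t)=U^{2k}(H^\beta(t))$ is then immediate from Theorem \ref{pseudowalkpari}: conditioning on $H^\beta(t)$ and using the Laplace transform of a positively-skewed stable subordinator,
\begin{equation*}
\mathbb{E}e^{i\xi U^{2k}(H^\beta(t))}=\mathbb{E}\bigl[e^{-H^\beta(t)|\xi|^{2k}}\bigr]=e^{-t|\xi|^{2k\beta}}.
\end{equation*}

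To obtain the sine representation I would invert the Fourier transform and exploit the gamma density $g^{2k\beta}(y,1/t)=2k\beta\,t\,y^{2k\beta-1}e^{-ty^{2k\beta}}$. By symmetry in $\xi$,
\begin{equation*}
v_{2k}^\beta(x,t)=\frac{1}{\pi}\int_0^\infty \cos(\xi x)\,e^{-t\xi^{2k\beta}}\,d\xi.
\end{equation*}
Then the key computation is an integration by parts in the representation of the gamma expectation,
\begin{equation*}
\mathbb{E}\bigl[\sin(xG^{2k\beta}(1/t))\bigr]=\int_0^\infty \sin(xy)\,\frac{d}{dy}\bigl[-e^{-ty^{2k\beta}}\bigr]dy=x\int_0^\infty \cos(xy)\,e^{-ty^{2k\beta}}\,dy,
\end{equation*}
the boundary terms vanishing at $0$ and $\infty$. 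Comparing with the inversion formula gives $v_{2k}^\beta(x,t)=\tfrac{1}{\pi x}\mathbb{E}[\sin(xG^{2k\beta}(1/t))]$, which is the second equality in \eqref{rappresentpari}.

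For the first equality I would condition on $H^\beta(t)=s$ and repeat the same integration by parts with $G^{2k}(1/s)$, whose density is $2k\,s\,y^{2k-1}e^{-sy^{2k}}$. This yields
\begin{equation*}
\mathbb{E}\bigl[\sin(xG^{2k}(1/H^\beta(t)))\bigr]=x\int_0^\infty \cos(xy)\int_0^\infty e^{-sy^{2k}}h_\beta(s,t)\,ds\,dy,
\end{equation*}
and the inner Laplace transform of the stable density evaluated at $y^{2k}$ equals $e^{-ty^{2k\beta}}$, so the expression again collapses to $\pi x\,v_{2k}^\beta(x,t)$. Hence both probabilistic representations coincide with $v_{2k}^\beta$, completing the proof. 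The only non-routine step I anticipate is justifying the Fubini interchange and the vanishing of the boundary terms in the integration by parts, both of which are controlled by the rapid decay $e^{-ty^{2k\beta}}$ for $y$ large and by the factor $y^{2k\beta-1}\sin(xy)=O(y^{2k\beta})$ near $y=0$.
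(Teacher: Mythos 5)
Your proof is correct, and the first half (Fourier transform of the Cauchy problem, the resulting ODE, and the identification of $e^{-t|\xi|^{2k\beta}}$ with the characteristic function of $U^{2k}(H^\beta(t))$ via conditioning on the subordinator) is exactly the paper's argument. Where you diverge is in verifying the sine representation: the paper works in the forward direction, computing the Fourier transform of $\frac{1}{\pi x}\mathbb{E}\left[\sin\left(xG^{2k}\left(1/H^\beta(t)\right)\right)\right]$ by means of the identity $\int_{\mathbb{R}}e^{i\xi x}\frac{\sin xy}{\pi x}\,dx=\mathcal{H}_{-y}(\xi)-\mathcal{H}_{y}(\xi)$ and then integrating the resulting indicator against the gamma and stable laws, with a case analysis on the sign of $\xi$; you instead invert $e^{-t|\xi|^{2k\beta}}$ into the cosine integral $\frac{1}{\pi}\int_0^\infty\cos(\xi x)e^{-t\xi^{2k\beta}}d\xi$ and recover it from the gamma expectation by a single integration by parts, exploiting that the gamma density $2k\beta t y^{2k\beta-1}e^{-ty^{2k\beta}}$ is exactly $\frac{d}{dy}\left[-e^{-ty^{2k\beta}}\right]$. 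The two computations are dual and both legitimate; yours is arguably cleaner since it avoids the Heaviside bookkeeping, and the boundary terms and Fubini interchanges you flag are indeed harmless. One small difference in what the arguments deliver: the paper derives the second form of \eqref{rappresentpari} from the first by showing directly that $G^{2k}\left(1/H^\beta(t)\right)$ and $G^{2k\beta}(1/t)$ have the same density (via $-\frac{\partial}{\partial y}\int_0^\infty e^{-sy^{2k}}h_\beta(s,t)ds=-\frac{\partial}{\partial y}e^{-ty^{2k\beta}}$), a fact of independent interest that your proof establishes only implicitly by matching both expectations to the same cosine integral.
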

\begin{proof}
The Fourier transform of \eqref{problemapari} leads to the Cauchy problem
\begin{align}
\begin{cases}
\frac{\partial}{\partial t} \widehat{v}_{2k}^\beta (\xi, t) \, = \, - | \xi |^{2k\beta} \widehat{v}_{2k}^\beta (\xi, t) \\
\widehat{v}_{2k}^\beta (\xi, 0) \, = \, 1,
\end{cases}
\end{align}
whose unique solution reads
\begin{align}
\mathbb{E}e^{i\xi V^{2k\beta} (t)} \, = \, & \int_{\mathbb{R}} dx \, e^{i\xi x} \int_0^\infty ds \, u_{2k} (x, s) \, h_\beta (s, t) \notag \\
= \, & \int_0^\infty ds \, e^{-s\xi^{2k}} h_{\beta} (s, t) \, = \,  e^{-t|\xi|^{2k\beta}}.
\label{carattpari}
\end{align}
In \eqref{carattpari} $u_{2k}$ is the density of $U^{2k}$ and $h_\beta(x, t)$ is the probability density of the subordinator $H^\beta$.
Now we show that the Fourier transform of \eqref{rappresentpari} coincides with \eqref{carattpari}. We have that
\begin{align}
\widehat{v}_{2k}^\beta (\xi, t) \, = \, & \int_{\mathbb{R}} dx e^{i\xi x} \, \frac{1}{\pi x} \mathbb{E} \left[ \sin \l x G^{2k} \l \frac{1}{H^\beta (t)} \r \r \right] \notag \\
= \, & \int_{\mathbb{R}} dx \, e^{i\xi x} \left[ \int_0^\infty \int_0^\infty \frac{\sin xy}{\pi x} \Pr \ll G^{2k} \l \frac{1}{s} \r \in dy \rr \, \Pr \ll H^\beta (t) \in ds \rr \right] \notag \\
= \, & \int_0^\infty \int_0^\infty \Pr \ll G^{2k} \l \frac{1}{s} \r \in dy \rr \, \Pr \ll H^\beta (t) \in ds \rr \left[ \int_{\mathbb{R}} dx \, e^{i\xi x} \frac{\sin xy}{\pi x} \right].
\label{27}
\end{align}
By considering that the Heaviside function
\begin{align}
\mathcal{H}_\alpha (z) \, = \,
\begin{cases}
1, \qquad & z > \alpha, \\
0, & z < \alpha
\end{cases}
\end{align}
can be represented as
\begin{align}
\mathcal{H}_\alpha (z) \, = \, \frac{1}{2\pi} \int_{\mathbb{R}} dw \, e^{iwz} \frac{e^{-i\alpha w}}{iw} \, = \, - \frac{1}{2\pi} \int_{\mathbb{R}} dw \, e^{-iwz} \frac{e^{i\alpha w}}{iw},
\end{align}
we obtain that formula \eqref{27} becomes
\begin{align}
& \widehat{v}_{2k}^\beta (\xi, t) \, = \notag \\
= \, & \int_0^\infty \int_0^\infty \Pr \ll G^{2k} \l \frac{1}{s} \r \in dy \rr \, \Pr \ll H^\beta (t) \in ds \rr \left[ \mathcal{H}_{-y} (\xi) - \mathcal{H}_y (\xi) \right] \notag \\
= \, & \int_0^\infty \int_0^\infty \Pr \ll G^{2k} \l \frac{1}{s} \r \in dy \rr \, \Pr \ll H^\beta (t) \in ds \rr \left[ \mathbb{I}_{[-\xi, + \infty]} (y) - \mathbb{I}_{[-\infty, \xi]} (y)  \right] \notag \\
= \, & \int_0^\infty \int_0^\infty dy \, ds \l 2ksy^{2k-1} e^{-sy^{2k}} \r \mathbb{I}_{[0, \infty]} (y) \left[ \mathbb{I}_{[-\xi, + \infty]} (y) - \mathbb{I}_{[-\infty, \xi]} (y)  \right] h_\beta (s, t).
\label{mammamia}
\end{align}
For $\xi > 0$ \eqref{mammamia} becomes
\begin{align}
\widehat{v}_{2k}^\beta (\xi, t) \, = \,  & \int_0^\infty ds  \left[ 1 - \int_0^\xi dy \, 2ks y^{2k-1} e^{-y^{2k}s}  \right] h_\beta (s, t) \notag \\
= \, & \int_0^\infty ds \,  e^{-\xi^{2k}s}   h_\beta (s, t) \, = \,  e^{-t| \xi |^{2k\beta}},
\end{align}
and for $\xi < 0$ \eqref{mammamia} is
\begin{align}
\widehat{v}_{2k}^\beta (\xi, t) \, = \, & \int_0^\infty ds \, \left[ \int_{-\xi}^\infty 2ksy^{2k-1} e^{-y^{2k}s} \right] h_\beta (s, t) \notag \\
= \, & \int_0^\infty ds \, e^{-| \xi |^{2k} s } \, h_\beta (s, t) \, = \,  e^{-t |\xi |^{2k\beta}}.
\end{align}
Since
\begin{align}
\Pr \ll  G^{2k} \l \frac{1}{H^\beta (t)} \r \in dy \rr /dy \, = \, & 2ky^{2k-1} \int_0^\infty se^{-sy^{2k}} h_\beta (s, t) \, ds \notag \\
 = \, & -\frac{\partial}{\partial y} \int_0^\infty e^{-sy^{2k}} h_\beta (s, t) \, ds \notag \\
  = \, & -\frac{\partial}{\partial y} e^{-y^{2k\beta}t} \notag \\ 
  = \, &  \Pr \ll  G^{2k\beta} \l \frac{1}{t} \r \in dy \rr /dy
\end{align}
the second form of the solution \eqref{rappresentpari} follows immediately.
\end{proof}
For $k \geq 1$, $\beta \in \l 0, \frac{1}{k} \right]$ the solutions \eqref{rappresentpari} are densities of symmetric random variables, while for $\beta > \frac{1}{k}$ the functions \eqref{rappresentpari} are sign-varying. Clearly for $\beta = 1$ we obtain the solution of even-order heat-type equations discussed in \cite{ecporsdov}. As far as space-fractional higher-order heat-type equations we have the result of the next theorem where the governing fractional operator $\mathfrak{R}$ is obtained as a suitable combination of Weyl derivatives. The operator $\mathfrak{R}$ governing the fractional pseudoprocesses appearing in Theorem \ref{pseudowalkdispari} is explicitely written for $\ll p, q \in [0,1] : p + q = 1 \rr$, $\ll \beta \in (0,1), \, k \in \mathbb{N}: m-1 < \beta (2k+1) < m, \, m \in \mathbb{N} \rr$ as
\begin{align}
& \mathfrak{R} \, v_{2k+1}^\beta (x, t) \, = \notag \\
 = \,  & - \frac{1}{\cos \frac{\pi \beta}{2}} \left[ p \, e^{i \pi \beta k} \, \frac{^+\partial^{\beta(2k+1)}}{\partial x^{\beta(2k+1)}} + q \, e^{-i \pi \beta k} \,  \frac{^-\partial^{\beta(2k+1)}}{\partial x^{\beta(2k+1)}} \right]  v_{2k+1}^\beta (x, t)   \notag \\
 = \, & - \frac{1}{\cos \frac{\pi \beta}{2} \Gamma (m-(2k+1)\beta))} \frac{\partial^m}{\partial x^m}   \left[ e^{i \pi \beta k}  p  \int_{-\infty}^x \frac{v_{2k+1}^\beta (y, t)}{(x-y)^{(2k+1)\beta-m+1}} dy \right. \notag \\
 &  \left. + q \, e^{-i \pi \beta k} \, (-1)^m \int_x^\infty \frac{v_{2k+1}^\beta(y, t)}{(y-x)^{(2k+1)\beta -m+1}} dy \right],
 \label{sommaweylpesata}
\end{align}
where the left and right Weyl fractional derivatives appear.
\begin{te}
The solution to the problem
\begin{align}
\begin{cases}
\frac{\partial }{\partial t} v_{2k+1}^\beta (x, t) \, = \, \mathfrak{R} \; v_{2k+1}^\beta (x, t), \qquad x \in \mathbb{R}, t>0, \beta \in (0,1), k \in \mathbb{N}, \\
v_{2k+1}^\beta (x, 0) \, = \, \delta(x),
\end{cases}
\label{problemapq}
\end{align}
is given by the signed law of the pseudoprocess
\begin{align}
\bar{V}^{\beta (2k+1)} (t) \,
 = \, U_1^{2k+1} \l H_1^\beta \l \frac{p t}{\cos \frac{\beta \pi}{2}} \r \r - \, U_2^{2k+1} \l H_2^\beta \l \frac{qt}{\cos \frac{\beta \pi}{2}} \r \r,
 \label{vbarrato}
 \end{align}
where $U_1^{2k+1}$, $U_2^{2k+1}$ are independent odd-order pseudoprocesses and $H_1^\beta$, $H_2^\beta$, are independent stable subordinators.
\end{te}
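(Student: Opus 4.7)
The natural strategy is to verify both sides by computing their Fourier transforms and checking that they solve the same Cauchy problem in $\xi$. First I would apply the $x$-Fourier transform to \eqref{problemapq}, reducing it to the ODE $\partial_t \widehat{v}_{2k+1}^\beta(\xi,t) = \widehat{\mathfrak{R}}(\xi)\,\widehat{v}_{2k+1}^\beta(\xi,t)$ with $\widehat{v}_{2k+1}^\beta(\xi,0) = 1$, whose unique solution is $\exp\{t\,\widehat{\mathfrak{R}}(\xi)\}$. The task is then reduced to showing that $\mathbb{E}e^{i\xi \bar V^{\beta(2k+1)}(t)}$ has exactly this form.

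To compute $\widehat{\mathfrak{R}}(\xi)$, I would substitute the symbols \eqref{simboloweyldestra}-\eqref{simboloweylsinistra} from Theorem \ref{teoremasimboloweyl} with $\gamma = \beta(2k+1)$ into the definition \eqref{sommaweylpesata}. This gives
\begin{align*}
\widehat{\mathfrak{R}}(\xi) \, = \, -\frac{|\xi|^{\beta(2k+1)}}{\cos\frac{\pi\beta}{2}}\left[ p\, e^{i\pi\beta k} e^{-\frac{i\pi\beta(2k+1)}{2}\mathrm{sign}(\xi)} + q\, e^{-i\pi\beta k} e^{\frac{i\pi\beta(2k+1)}{2}\mathrm{sign}(\xi)}\right].
\end{align*}
The weights $e^{\pm i\pi\beta k}$ are chosen precisely to cancel the integer-part $e^{\mp i\pi\beta k\,\mathrm{sign}(\xi)}$ contributions appearing in the Weyl symbols when $\mathrm{sign}(\xi)=\pm 1$ (this requires splitting into the two cases, but both collapse to the same expression). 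After this cancellation the bracket becomes $p\,e^{-\frac{i\pi\beta}{2}\mathrm{sign}(\xi)} + q\,e^{\frac{i\pi\beta}{2}\mathrm{sign}(\xi)} = \cos\frac{\pi\beta}{2} - i(p-q)\sin\frac{\pi\beta}{2}\,\mathrm{sign}(\xi)$, yielding
\begin{equation*}
\widehat{\mathfrak{R}}(\xi) \, = \, -|\xi|^{\beta(2k+1)}\bigl(1 - i(p-q)\tan\tfrac{\pi\beta}{2}\,\mathrm{sign}(\xi)\bigr),
\end{equation*}
which matches the exponent appearing in \eqref{119}.

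For the pseudoprocess side I would use independence of $U_1^{2k+1}, U_2^{2k+1}, H_1^\beta, H_2^\beta$ to factor $\mathbb{E}e^{i\xi\bar V^{\beta(2k+1)}(t)}$ into two terms. Conditioning on each subordinator and using the Fourier transform $e^{-is\xi^{2k+1}}$ of $U^{2k+1}(s)$ together with the Laplace functional $\mathbb{E}e^{-\lambda H^\beta(s)} = e^{-s\lambda^\beta}$ (with $\lambda = \pm i\xi^{2k+1}$, interpreted via $(\pm i\xi^{2k+1})^\beta = |\xi|^{\beta(2k+1)}e^{\pm\frac{i\pi\beta}{2}\mathrm{sign}(\xi)}$, exactly as in the proof of Theorem \ref{pseudowalkdispari}) gives the product
\begin{equation*}
\exp\left\{-\frac{t}{\cos\frac{\pi\beta}{2}}|\xi|^{\beta(2k+1)}\bigl(p\,e^{-\frac{i\pi\beta}{2}\mathrm{sign}(\xi)} + q\,e^{\frac{i\pi\beta}{2}\mathrm{sign}(\xi)}\bigr)\right\}.
\end{equation*}
The time-rescaling factor $1/\cos(\pi\beta/2)$ built into \eqref{vbarrato} is precisely what is needed to cancel the $\cos(\pi\beta/2)$ in the denominator once the same trigonometric simplification is performed, producing $\exp\{t\,\widehat{\mathfrak{R}}(\xi)\}$. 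The main delicate point is tracking the $\mathrm{sign}(\xi)$-dependence consistently in the branches $(\pm i\xi^{2k+1})^\beta$ for odd $2k+1$; once the two cases $\xi>0$ and $\xi<0$ are handled symmetrically, Fourier inversion identifies the signed density of $\bar V^{\beta(2k+1)}(t)$ with the unique solution of \eqref{problemapq}.
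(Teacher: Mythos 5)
Your proposal is correct and follows essentially the same route as the paper: compute the symbol of $\mathfrak{R}$ from the Weyl Fourier transforms of Theorem \ref{teoremasimboloweyl} to get $-|\xi|^{\beta(2k+1)}\bigl(1-i(p-q)\tan\tfrac{\pi\beta}{2}\,\mathrm{sign}(\xi)\bigr)$, then match it against the characteristic function of \eqref{vbarrato} obtained by conditioning on the subordinators (the paper simply cites its earlier computation \eqref{caratteristicapseudosimm} and applies the time rescaling $t\mapsto t/\cos\tfrac{\beta\pi}{2}$, which you rederive inline). The cancellation of the $e^{\pm i\pi\beta k}$ weights against the integer part of the Weyl symbols that you flag as the delicate point is exactly the step the paper handles via its remark that $e^{i\pi k\beta}=e^{-i\pi k\beta}$, so no new gap is introduced.
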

\begin{proof}
The Fourier transform of \eqref{sommaweylpesata} is written as
\begin{align}
& \mathcal{F} \left[ \mathfrak{R} \, v_{2k+1}^\beta (x, t) \right] (\xi) \, = \notag \\
= \, & \mathcal{F} \left[ - \frac{1}{\cos \frac{\pi \beta}{2}} \left[ p \, e^{i \pi \beta k} \,  \frac{^+\partial^{\beta(2k+1)}}{\partial x^{\beta(2k+1)}} + q \, e^{-i \pi \beta k} \,  \frac{^-\partial^{\beta(2k+1)}}{\partial x^{\beta(2k+1)}} \right] v_{2k+1}^\beta (x, t) \right] (\xi) \notag \\
= \, & -\frac{1}{\cos \frac{\beta \pi}{2}} \left[ p \, e^{i \pi \beta k} \,  (-i\xi)^{\beta (2k+1)} + q \, e^{-i \pi \beta k} \,  (i\xi)^{\beta (2k+1)} \right] \widehat{v}_{2k+1}^\beta (\xi, t) \notag \\
= \, & - \frac{1}{\cos \frac{\beta \pi}{2}} | \xi |^{\beta (2k+1)} \left[ pe^{-\frac{i\pi \beta}{2} \textrm{ sign}(\xi)} + q e^{\frac{i\pi \beta}{2} \textrm{ sign}(\xi)}  \right] \widehat{v}_{2k+1}^\beta (\xi, t) \notag \\
= \, & -|\xi |^{\beta (2k+1)} \l 1- i \textrm{ sign}(\xi) \, (p-q) \tan \frac{\pi \beta}{2} \r \widehat{v}_{2k+1}^\beta (\xi, t)
\end{align}
and therefore we have that
\begin{equation}
\widehat{v}^\beta_{2k+1} (\xi, t)  \, = \, e^{-t| \xi |^{\beta(2k+1)} \l 1-i \l p-q \r \textrm{ sign}(\xi) \tan \frac{\beta \pi}{2} \r}
\end{equation}
In view of \eqref{caratteristicapseudosimm} we get
\begin{align}
\mathbb{E}e^{i\xi \bar{V}^{(2k+1)\beta}(t)} \, = \, & \mathbb{E}e^{i\xi V^{(2k+1)\beta}\l \frac{t}{\cos \frac{\beta \pi}{2}} \r} \notag \\
= \, & e^{-t |\xi |^{\beta(2k+1)} \l 1 -i \textrm{ sign}(\xi) \, (p-q) \tan  \frac{\pi \beta}{2} \r}
\end{align}
and this confirms that the solution to \eqref{problemapq} is given by the law of the pseudoprocess \eqref{vbarrato}.
\end{proof}
\begin{os}
Since $\frac{e^{\pm i \pi k \beta}}{\cos \frac{\beta \pi}{2}} = \frac{1}{\cos \beta (2k+1) \frac{\pi}{2}}$ (because $e^{i\pi k\beta} = \l e^{i\pi} \r^{k\beta} = \l e^{-i\pi} \r^{k\beta} = e^{-i\pi k\beta}$) the operator \eqref{sommaweylpesata} takes the form of the Riesz fractional derivative of order $\beta(2k+1)$ when $p=q=\frac{1}{2}$.
\end{os}

We now pass to the derivation of the governing equation of the fractional pseudoprocesses studied in Theorem \ref{teoremarwfellerdispari}. We first recall the definition of the Feller space-fractional derivative which is
\begin{align}
^FD^{\beta, \theta} u(x) \, = \, - \left[ \frac{\sin \frac{\pi}{2}(\beta - \theta)}{\sin (\pi \beta)} \frac{^+\partial^\beta}{\partial x^\beta}  + \frac{\sin \frac{\pi}{2}(\beta + \theta)}{\sin (\pi \beta)} \frac{^-\partial^\beta}{\partial x^\beta}  \right]u(x).
\end{align}
We recall that
\begin{align}
\mathcal{F} \left[  ^FD^{\beta, \theta} u(x) \right] (\xi) \, = \, -|\xi|^{\beta} e^{\frac{i \pi \theta}{2} \textrm{ sign}(\xi) } \widehat{u}(\xi), 
\label{fourierfeller01}
\end{align}
as can be shown by means of the following calculation
\begin{align}
\int_{\mathbb{R}} dx \, e^{i\xi x} \;  ^FD^{\beta, \theta} u(x) \, = \, & - \left[ \frac{\sin \frac{\pi}{2}(\beta -  \theta)}{\sin (\pi \beta)} (-i\xi)^\beta  + \frac{\sin \frac{\pi}{2}(\beta + \theta)}{\sin (\pi \beta)} (i\xi)^\beta  \right] \widehat{u}(\xi) \notag \\
= \, & -\frac{|\xi|^{\beta}}{2i \sin \pi \beta} \left[ \l e^{\frac{i\pi}{2}\beta} e^{-\frac{i\pi}{2}\theta} - e^{-\frac{i\pi}{2}\beta} e^{\frac{i\pi}{2}\theta} \r e^{-\frac{i\pi}{2}\beta \textrm{ sign}(\xi)} + \right. \notag \\
& + \left. \l e^{\frac{i\pi}{2}\beta} e^{\frac{i\pi}{2}\theta} - e^{-\frac{i\pi}{2}\beta} e^{-\frac{i\pi}{2}\theta} \r e^{\frac{i\pi}{2}\beta \textrm{ sign}(\xi)} \right] \widehat{u}(\xi)  \notag \\
= \, & \begin{cases} -\xi^{\beta} e^{\frac{i\pi \theta}{2} } \widehat{u}(\xi), \qquad &\xi > 0, \\
-(-\xi)^{\beta} e^{-\frac{i\pi \theta}{2} } \widehat{u}(\xi), \qquad &\xi < 0
\end{cases} \notag \\
= \, & -|\xi|^{\beta} e^{\frac{i \pi \theta}{2} \textrm{ sign}(\xi) } \widehat{u}(\xi)
\end{align}
where we used the results of Theorem \ref{teoremasimboloweyl}.
The explicit form of the Fourier transform of the solution to 
\begin{equation}
\frac{\partial}{\partial t} u(x, t) \, = \, ^FD^{\beta, \theta} u(x, t), \qquad u(x, 0) \, = \, \delta(x), \qquad x \in \mathbb{R}, t>0,
\end{equation}
is written as
\begin{equation}
\widehat{u}(\xi, t) \, = \, e^{-|\xi |^\beta t e^{\frac{i\pi \theta}{2} \textrm{ sign}(\xi)} }
\label{stable}
\end{equation}
and for $\beta \in (0,2]$, $4m-1 < \theta < 4m+1$, $m \in \mathbb{N}$, represents the characteristic function of a stable r.v.. The last condition on $\theta$ is due to the fact that
\begin{equation}
\left| \widehat{u}(\xi, t) \right| \leq 1 \textrm{ if and only if } \cos \frac{\theta \pi}{2} \in (0,1].
\label{condition}
\end{equation}
The condition \eqref{condition} must be assumed also for $\beta > 2$ where \eqref{stable} however fails to be the characteristic function of a genuine r.v..
For $\theta = \beta <1$ \eqref{stable} becomes totally negatively skewed. By interchanging $\sin (\beta - \theta) \frac{\pi}{2}$ with $\sin (\beta + \theta) \frac{\pi}{2}$ we obtain instead
\begin{align}
\widehat{u}(\xi, t) \, = \, e^{-|\xi |^{\beta}t e^{-\frac{i\pi}{2}\theta \textrm{ sign}(\xi)} }
\end{align}
which is totally positively skewed for $\theta = \beta<1$.

We are now ready to prove the following Theorem.
\begin{te}
Let $Z^{\beta (2k+1), \theta}(t)$, $t>0$, be the limiting fractional pseudoprocess studied in Theorem \ref{teoremarwfellerdispari}. The signed density of $Z^{\beta (2k+1), \theta}(t)$ is the solution to
\begin{align}
\begin{cases}
\frac{\partial}{\partial t} z^{\beta(2k+1), \theta}(x, t) \, = \, ^FD^{\beta(2k+1), \theta} z^{\beta(2k+1), \theta}(x, t) \\
z^{\beta(2k+1), \theta}(x, 0) \, = \, \delta(x)
\end{cases}
\label{cauchyfeller}
\end{align}
and coincide with the signed distribution of the composition for $\beta \in (0,1)$, $-\beta <\theta < \beta$,
\begin{align}
\mathfrak{Z}^{\beta(2k+1), \theta} (t) \, = \,  U_1^{2k+1} \l H_1^\beta \l \frac{\sin \frac{\pi}{2}(\beta + \theta)}{\sin \pi \beta}t \r \r - U_2^{2k+1} \l H_2^\beta \l \frac{\sin \frac{\pi}{2}(\beta - \theta)}{\sin \pi \beta} t\r  \r,
\label{impiccio}
\end{align}
where $H^\beta_{j}$, $j=1, 2$ are independent stable r.v.'s and the independent pseudoprocesses $U_j^{2k+1}$, $j=1, 2$, are related to the odd-order heat-type equation
\begin{align}
\frac{\partial}{\partial t} u_{2k+1} (x, t) \, = \, (-1)^k \frac{\partial^{2k+1}}{\partial x^{2k+1}} u_{2k+1} (x, t).
\end{align}
The positivity of the time scales in \eqref{impiccio} implies that $-\beta < \theta < \beta$.
\end{te}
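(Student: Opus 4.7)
The plan is to verify the theorem by checking that both the solution of the Cauchy problem \eqref{cauchyfeller} and the signed law of $\mathfrak{Z}^{\beta(2k+1),\theta}(t)$ have the same Fourier transform, namely the one already identified in Theorem \ref{teoremarwfellerdispari}.

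First I would Fourier-transform the Cauchy problem. The Fourier symbol \eqref{fourierfeller01} of $^{F}D^{\beta,\theta}$ was deduced purely from the Weyl symbols of Theorem \ref{teoremasimboloweyl}, which hold for all non-integer positive orders; hence it extends verbatim with $\beta$ replaced by $\beta(2k+1)$. So \eqref{cauchyfeller} becomes the first-order ODE
\begin{equation*}
\frac{\partial}{\partial t}\widehat{z}^{\beta(2k+1),\theta}(\xi,t) \;=\; -|\xi|^{\beta(2k+1)} e^{\frac{i\pi\theta}{2}\,\mathrm{sign}(\xi)}\,\widehat{z}^{\beta(2k+1),\theta}(\xi,t),\qquad \widehat{z}^{\beta(2k+1),\theta}(\xi,0)=1,
\end{equation*}
whose unique solution is $\exp\bigl(-t|\xi|^{\beta(2k+1)} e^{\frac{i\pi\theta}{2}\mathrm{sign}(\xi)}\bigr)$, matching exactly the Fourier transform identified in Theorem \ref{teoremarwfellerdispari}.

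Next I would compute the Fourier transform of $\mathfrak{Z}^{\beta(2k+1),\theta}(t)$ directly. Writing $\tau_1=t\sin\frac{\pi(\beta+\theta)}{2}/\sin\pi\beta$ and $\tau_2=t\sin\frac{\pi(\beta-\theta)}{2}/\sin\pi\beta$, independence factorizes the characteristic function. For each piece I would condition on the subordinator and combine the odd-order pseudoprocess transform $\int e^{i\xi x}u_{2k+1}(x,s)\,dx = e^{-is\xi^{2k+1}}$ with the Laplace transform $\int_0^\infty e^{-\eta s}h_\beta(s,\tau)\,ds=e^{-\tau\eta^\beta}$ (continued analytically, as already used in the proof of Theorem \ref{pseudowalkdispari}). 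Since $2k+1$ is odd, $\mathrm{sign}(\xi^{2k+1})=\mathrm{sign}(\xi)$, so the two factors become
\begin{equation*}
\exp\!\left(-\tau_1 |\xi|^{\beta(2k+1)} e^{\frac{i\pi\beta}{2}\mathrm{sign}(\xi)}\right)\quad\text{and}\quad \exp\!\left(-\tau_2 |\xi|^{\beta(2k+1)} e^{-\frac{i\pi\beta}{2}\mathrm{sign}(\xi)}\right).
\end{equation*}

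The main step is the trigonometric reduction. After summing, the coefficient of $|\xi|^{\beta(2k+1)}$ in the combined exponent equals
\begin{equation*}
\frac{\sin\frac{\pi(\beta+\theta)}{2}\,e^{\frac{i\pi\beta}{2}\mathrm{sign}(\xi)} + \sin\frac{\pi(\beta-\theta)}{2}\,e^{-\frac{i\pi\beta}{2}\mathrm{sign}(\xi)}}{\sin\pi\beta}.
\end{equation*}
Expanding the two sines by the sum formula and using $\sin\pi\beta = 2\sin\frac{\pi\beta}{2}\cos\frac{\pi\beta}{2}$ causes the $\sin\frac{\pi\beta}{2}\cos\frac{\pi\theta}{2}$ terms to combine into $\cos\frac{\pi\theta}{2}\cdot 2\cos\frac{\pi\beta}{2}$ and the $\cos\frac{\pi\beta}{2}\sin\frac{\pi\theta}{2}$ terms into $i\sin\frac{\pi\theta}{2}\cdot 2\sin\frac{\pi\beta}{2}\mathrm{sign}(\xi)$, so the whole expression reduces to $e^{\frac{i\pi\theta}{2}\mathrm{sign}(\xi)}$. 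This matches the Fourier transform of $z^{\beta(2k+1),\theta}$, and identity of Fourier transforms identifies the two signed laws.

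Finally I would read off the constraint: the subordinators $H_j^\beta$ require nonnegative time arguments, and since $\sin\pi\beta>0$ for $\beta\in(0,1)$, positivity of both $\sin\frac{\pi(\beta\pm\theta)}{2}$ is exactly $-\beta<\theta<\beta$. The main technical care needed is in handling signs in the step where $(i\xi^{2k+1})^{\beta}$ is written as $|\xi|^{\beta(2k+1)}e^{\pm i\pi\beta\,\mathrm{sign}(\xi)/2}$ on both half-lines, and in the trigonometric collapse above; apart from this, the proof is a routine Fourier-transform verification analogous to those of Theorems \ref{pseudowalkdispari} and \ref{teoremarwfellerdispari}.
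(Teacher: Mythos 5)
Your proposal is correct and follows essentially the same route as the paper's proof: Fourier-transforming the Cauchy problem via the symbol \eqref{fourierfeller01}, factorizing the characteristic function of \eqref{impiccio} by independence, conditioning on the subordinators to get $e^{-\tau_1(i\xi^{2k+1})^\beta}e^{-\tau_2(-i\xi^{2k+1})^\beta}$, and collapsing the trigonometric combination to $e^{\frac{i\pi\theta}{2}\mathrm{sign}(\xi)}$ (the paper does this with complex exponentials where you use angle-addition formulas, an immaterial difference). The positivity remark on the time scales also matches the paper.
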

\begin{proof}
By profiting from the result \eqref{fourierfeller01} we note that the Fourier transform of \eqref{cauchyfeller} is written as
\begin{align}
\begin{cases}
\frac{\partial}{\partial t} \widehat{z}^{\beta(2k+1), \theta} (\xi, t) \, = \, -|\xi |^{\beta(2k+1)} e^{\frac{i\pi}{2}\theta \textrm{ sign}(\xi)} \; \widehat{z}^{\beta(2k+1), \theta} (\xi, t)\\
\widehat{z}^{\beta(2k+1), \theta} (\xi,0) \, = \, 1.
\end{cases}
\end{align}
which is satisfied by the Fourier transform
\begin{equation}
\widehat{z}^{\beta(2k+1), \theta} (\xi, t) \, = \, e^{-t|\xi|^{\beta(2k+1)} e^{\frac{i\pi}{2}\theta \textrm{ sign}(\xi)}}.
\label{adessobasta}
\end{equation}
We now prove that the Fourier transform of \eqref{impiccio} coincides with \eqref{adessobasta}. In view of the independence of the r.v.'s and pseudo r.v.'s involved we write that
\begin{align}
& \mathbb{E}e^{i\xi \mathfrak{Z}^{\beta(2k+1), \theta} (t)} \, = \, \mathbb{E}e^{i\xi \left[ U_1^{2k+1} \l H_1^\beta \l \frac{\sin \frac{\pi}{2}(\beta + \theta)}{\sin \pi \beta}t \r \r - U_2^{2k+1} \l H_2^\beta \l \frac{\sin \frac{\pi}{2}(\beta - \theta)}{\sin \pi \beta} t\r  \r \right]} \notag \\
= \, & \left[ \int_{\mathbb{R}} dx \, e^{i\xi x}  \int_0^\infty ds \, u^1_{2k+1} (x, s) h^1_\beta \l s, \frac{\sin \frac{\pi}{2}(\beta + \theta)}{\sin \pi \beta}t  \r \right] \notag \\
& \times  \left[ \int_{\mathbb{R}}  dx \, e^{-i\xi x} \int_0^\infty ds \, u^2_{2k+1} (x, s) h^2_\beta \l s, \frac{\sin \frac{\pi}{2}(\beta - \theta)}{\sin \pi \beta}t  \r \right] \notag \\
 = \, & \left[ \int_0^\infty e^{-i\xi^{2k+1}s} h^1_\beta \l s, \frac{\sin \frac{\pi}{2}(\beta + \theta)}{\sin \pi \beta}t  \r ds \right]  \left[ \int_0^\infty e^{i\xi^{2k+1}s} h^2_\beta \l s, \frac{\sin \frac{\pi}{2}(\beta + \theta)}{\sin \pi \beta}t  \r ds \right] \notag \\
= \, & e^{-t \frac{\sin \frac{\pi}{2}(\beta + \theta)}{\sin \pi \beta } \l i\xi^{2k+1} \r^\beta}  e^{-t \frac{\sin \frac{\pi}{2}(\beta - \theta)}{\sin \pi\beta} \l - i\xi^{2k+1} \r^\beta} \notag \\
= \, & e^{- \frac{t|\xi |^{\beta(2k+1)}}{\sin \pi \beta} \left[  \sin \frac{\pi}{2}(\beta + \theta)  e^{\frac{i\pi}{2}\beta \textrm{ sign}(\xi)}   + \sin \frac{\pi}{2} (\beta - \theta) e^{-\frac{i\pi}{2}\beta \textrm{ sign}(\xi)} \right] } \notag \\
= \, &  e^{- \frac{t|\xi |^{\beta(2k+1)}}{2i\sin \pi \beta} \left[  \l e^{\frac{i\pi}{2}\beta} e^{\frac{i\pi}{2}\theta} - e^{-\frac{i\pi}{2}\beta} e^{-\frac{i\pi}{2}\theta} \r e^{\frac{i\pi}{2}\beta \textrm{ sign}(\xi)} + \l e^{\frac{i\pi}{2}\beta} e^{-\frac{i\pi}{2}\theta} - e^{-\frac{i\pi}{2}\beta} e^{\frac{i\pi}{2}\theta} \r e^{-\frac{i\pi}{2}\beta \textrm{ sign}(\xi)}  \right] } \notag \\
= \, & e^{-t|\xi|^{\beta(2k+1)} e^{\frac{i\pi \theta}{2}\textrm{ sign}(\xi)} }
\end{align}
which coincides with \eqref{adessobasta}.
\end{proof}

\section{Some remarks}
We give various forms for the density $v^\gamma (x, t)$ of symmetric pseudoprocesses of arbitrary order $\gamma > 0$. For integer values of $\gamma = 2n$ or $\gamma = 2n+1$ the analysis of the structure of these densities is presented in \citet{ecporsdov}. We give here an analytical representation of $v^\gamma (x, t)$ for non-integer values of $\gamma$, which is an alternative to \eqref{rappresentpari}, as a power series and in integral form (involving the Mittag-Leffler functions). Furthermore, in Figure \ref{figura1} we give some curves for special values of $\gamma$. We also give the distribution of the sojourn time of compositions of pseudoprocesses with stable subordinators (totally positively skewed stable r.v.'s).
\begin{prop}
For $\gamma > 1$ the inverse of the Fourier transform
\begin{equation}
\widehat{v}^\gamma (\xi, t) \, = \, e^{-t|\xi |^\gamma}
\end{equation}
can also be written as
\begin{align}
v^\gamma (x, t) \, = \, & \frac{1}{\pi} \int_0^\infty \cos (\xi x) \, e^{-t \xi^\gamma} \, d\xi \notag \\
= \, & \frac{1}{\pi \gamma} \sum_{k=0}^\infty \frac{(-1)^k x^{2k}}{(2k)!} \frac{\Gamma \l \frac{2k+1}{\gamma} \r}{t^{\frac{2k+1}{\gamma}}} \notag \\
= \, & \frac{1}{\pi \gamma} \sum_{k=0}^\infty \frac{(-1)^k x^{2k}}{t^{\frac{2k+1}{\gamma}}} \frac{B \l \frac{2k+1}{\gamma}, \, (2k+1) \l 1-\frac{1}{\gamma} \r \r}{\Gamma \l (2k+1) \l 1-\frac{1}{\gamma} \r \r} \notag \\
= \, & \frac{1}{\pi \gamma}\sum_{k=0}^\infty \frac{(-1)^k x^{2k}}{t^{\frac{2k+1}{\gamma}}} \frac{1}{\Gamma \l (2k+1) \l 1-\frac{1}{\gamma} \r \r} \int_0^1 dy \, y^{\frac{2k+1}{\gamma}-1} (1-y)^{(2k+1) \l 1-\frac{1}{\gamma} \r -1} \notag \\
= \, & \frac{1}{\pi \gamma} \int_0^1 dy \, \sum_{k=0}^\infty \frac{(-1)^k \l x y^{\frac{1}{\gamma}} \l 1-y \r^{1-\frac{1}{\gamma}}  \r^{2k}}{t^{\frac{2k+1}{\gamma}} \Gamma \l (2k+1) \l 1-\frac{1}{\gamma} \r \r} \, y^{\frac{1}{\gamma}-1} \l 1-y \r^{-\frac{1}{\gamma}} \notag \\
= \, & \frac{t^{-\frac{1}{\gamma}}}{\pi \gamma} \int_0^1 dy \, E_{2\l 1-\frac{1}{\gamma} \r, 1-\frac{1}{\gamma}} \l -\l x y^{\frac{1}{\gamma}} (1-y)^{1-\frac{1}{\gamma}} \r^2 t^{-\frac{1}{\gamma}} \r \; y^{\frac{1}{\gamma}-1} \l 1-y \r^{-\frac{1}{\gamma}} \notag \\
\stackrel{w=y/(1-y)}{=} \, &  \frac{t^{-\frac{1}{\gamma}}}{\pi \gamma} \int_0^\infty dw \, E_{2\l 1-\frac{1}{\gamma} \r, 1-\frac{1}{\gamma}} \l -x^2 \l \frac{w^{\frac{1}{\gamma}}}{1+w} \r^2 t^{-\frac{1}{\gamma}} \r \frac{w^{\frac{1}{\gamma}}}{1+w} \frac{1}{w} 
\label{alternativarappr}
\end{align}
and for $\gamma < 2$ coincides with the characteristic function of symmetric stable processes.
\end{prop}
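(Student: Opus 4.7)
The plan is to follow the displayed chain of equalities line by line, each step being either a standard Fourier/analytic manipulation or an algebraic identity. First I would invoke Fourier inversion: since $\widehat{v}^\gamma(\xi,t)=e^{-t|\xi|^\gamma}$ is real and even in $\xi$,
\[
v^\gamma(x,t)\,=\,\frac{1}{2\pi}\int_{\mathbb R}e^{-i\xi x}e^{-t|\xi|^\gamma}\,d\xi\,=\,\frac{1}{\pi}\int_0^\infty\cos(\xi x)\,e^{-t\xi^\gamma}\,d\xi,
\]
where the integrability of $e^{-t\xi^\gamma}$ (for any $\gamma>0$) makes the integral absolutely convergent; the hypothesis $\gamma>1$ is what will guarantee convergence of the series to follow.

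Next I would expand $\cos(\xi x)=\sum_{k\geq 0}(-1)^k(\xi x)^{2k}/(2k)!$, justify the interchange of sum and integral by uniform absolute bounds (Stirling makes the resulting series converge for $\gamma>1$), and evaluate each moment via the substitution $u=t\xi^\gamma$:
\[
\int_0^\infty \xi^{2k}e^{-t\xi^\gamma}\,d\xi\,=\,\frac{1}{\gamma\, t^{(2k+1)/\gamma}}\,\Gamma\!\left(\frac{2k+1}{\gamma}\right).
\]
This yields the first power series form. To pass to the Beta-function form I would use $(2k)!=\Gamma(2k+1)$ together with the identity $B(a,b)=\Gamma(a)\Gamma(b)/\Gamma(a+b)$ applied to
\[
a\,=\,\frac{2k+1}{\gamma},\qquad b\,=\,(2k+1)\!\left(1-\frac{1}{\gamma}\right),\qquad a+b\,=\,2k+1,
\]
which immediately rewrites $\Gamma((2k+1)/\gamma)/(2k)!$ as $B(a,b)/\Gamma(b)$, and then insert the integral representation $B(a,b)=\int_0^1 y^{a-1}(1-y)^{b-1}\,dy$.

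The remaining step is to interchange the sum and the $y$-integral (again legitimate by the same uniform bounds once $\gamma>1$), factor out the $k$-independent piece $y^{1/\gamma-1}(1-y)^{-1/\gamma}t^{-1/\gamma}$, and observe that the residual factor
\[
\left(xy^{1/\gamma}(1-y)^{1-1/\gamma}\right)^{2k}t^{-2k/\gamma}
\]
combined with $1/\Gamma((2k+1)(1-1/\gamma))$ is precisely the series expansion of the Mittag-Leffler function $E_{2(1-1/\gamma),\,1-1/\gamma}$ evaluated at $-\bigl(xy^{1/\gamma}(1-y)^{1-1/\gamma}\bigr)^2 t^{-1/\gamma}$. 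The last displayed identity then follows by the change of variables $w=y/(1-y)$, under which $y=w/(1+w)$, $1-y=1/(1+w)$, $dy=dw/(1+w)^2$, giving $y^{1/\gamma}(1-y)^{1-1/\gamma}=w^{1/\gamma}/(1+w)$ and $y^{1/\gamma-1}(1-y)^{-1/\gamma}\,dy=w^{1/\gamma-1}/(1+w)\,dw$.

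The only delicate point is the double interchange of summation and integration: the integral exchange with $\cos$ is straightforward by dominated convergence, but the second exchange (inside the Beta integral) requires checking that $\sum_k \bigl(xy^{1/\gamma}(1-y)^{1-1/\gamma}\bigr)^{2k}t^{-2k/\gamma}/\Gamma((2k+1)(1-1/\gamma))$ is integrable in $y$ on $[0,1]$. This is the main technical obstacle, and it is handled through the entire-function growth of the Mittag-Leffler function together with the integrability of $y^{1/\gamma-1}(1-y)^{-1/\gamma}$ for $\gamma>1$, which is exactly where the hypothesis $\gamma>1$ is used; the claim that $v^\gamma$ coincides with the characteristic exponent of a symmetric stable process for $\gamma<2$ is then immediate from the Fourier representation and Bochner's theorem.
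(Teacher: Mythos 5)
Your argument follows the paper's own derivation step for step: Fourier inversion of the even function, term-by-term integration of the cosine series via the substitution $u=t\xi^\gamma$, the Beta-function rewriting with $a+b=2k+1$ (which is exactly where $\gamma>1$ enters, since $b=(2k+1)\left(1-\frac{1}{\gamma}\right)$ must be positive), and the resummation into a Mittag--Leffler function followed by the change of variables $w=y/(1-y)$; the justifications you supply for the two interchanges are the right ones. The only point worth flagging is one you inherit from the displayed statement itself: your residual factor $t^{-2k/\gamma}$ is the $k$-th power of $t^{-2/\gamma}$, not of $t^{-1/\gamma}$, so the Mittag--Leffler argument should read $-\bigl(xy^{1/\gamma}(1-y)^{1-1/\gamma}\bigr)^2\,t^{-2/\gamma}$ --- a typo in the formula rather than a gap in your reasoning.
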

Formula \eqref{alternativarappr} is an alternative to the probabilistic representation \eqref{rappresentpari} for $\gamma = 2k\beta$. For $1<\gamma < 2$ it represents the density of a symmetric stable r.v..
\begin{os}
We note that
\begin{align}
v^\gamma (0, t) \, = \,  \frac{t^{-\frac{1}{\gamma}}}{\pi} \Gamma \l 1+ \frac{1}{\gamma} \r
\end{align}
as can be inferred from \eqref{alternativarappr}. In the neighbourhood of $x=0$ the density $v^\gamma (x, t)$ can be written as
\begin{align}
v^\gamma (x, t) \, \approx \, & \frac{1}{\pi \gamma} \l \frac{1}{t^{\frac{1}{\gamma}}} \Gamma \l \frac{1}{\gamma} \r - \frac{x^2}{2} \frac{\Gamma \l \frac{3}{\gamma} \r}{t^{\frac{3}{\gamma}}} \r \notag \\
= \, & v^\gamma (0, t) \, \l 1-x^2 \frac{C_\gamma}{2t^{\frac{2}{\gamma}}} \r
\end{align}
where
\begin{equation}
C_\gamma \, = \, \frac{\Gamma \l \frac{1}{\gamma} + \frac{1}{3} \r \Gamma \l \frac{1}{\gamma} + \frac{2}{3} \r \, 3^{\frac{3}{\gamma}-\frac{1}{2}}}{2\pi}.
\end{equation}
In the above calculation the triplication formula of the Gamma function (see \citet{lebedev} page 14) has been applied
\begin{equation}
\Gamma (z) \Gamma \l z+\frac{1}{3} \r \Gamma \l z+\frac{2}{3} \r \, = \, \frac{2\pi}{3^{3z-\frac{1}{2}}} \Gamma (3z).
\end{equation}
\end{os}

\begin{os}
For even-order pseudoprocesses $U^{2k}(t)$, $t>0$, the distribution of the sojourn time
\begin{equation}
\Gamma_t \l U^{2k} \r \, = \, \int_0^t \mathbb{I}_{[0, \infty)} \l U^{2k}(s) \r ds
\end{equation}
follows the arcsine law for all $n\geq 1$ (see \citet{krylov}). Therefore the distribution of the sojourn time of $U^{2k} \l H^\beta (t) \r$, $t>0$, $\beta \in (0,1)$, reads
\begin{align}
\Pr \ll \Gamma_t \l U^{2k} \l H^\beta \r \r \in dx \rr \, = \, &  \int_0^\infty \Pr \ll \Gamma_s \l U^{2k} \r \in dx \rr \, \Pr \ll H^\beta(t) \in ds \rr \notag \\
= \, & \frac{dx}{\pi} \int_x^\infty \frac{1}{\sqrt{x (s-x)}} \Pr \ll H^\beta (t) \in ds \rr.
\label{55}
\end{align}
In the odd-order case the distribution of the sojourn time
\begin{align}
\Gamma_t \l U^{2k+1} \r \, = \, \int_0^t \mathbb{I}_{[0, \infty)} \l U^{2k+1} \l s \r \r ds
\end{align}
is written as (see \citet{lachal2003})
\begin{equation}
\Pr \ll \Gamma_t \l U^{2k+1} \r \in dx \rr \, = \, dx \, \frac{\sin \frac{\pi}{2k+1}}{\pi} x^{-\frac{1}{2k+1}} \l t-x \r^{-\frac{2k}{2k+1}} \mathbb{I}_{(0,t)} (x)
\end{equation}
and thus we get
\begin{align}
\Pr \ll \Gamma_t \l U^{2k+1} \l H^\beta \r \r \in dx \rr \, = \, & \frac{dx \, \sin \frac{\pi}{2k+1}}{\pi} \int_x^\infty \frac{1}{\sqrt[2k+1]{x (s-x)^{2k}}} \, \Pr \ll H^\beta (t) \in ds \rr.
\end{align}
\end{os}
For $\beta = \frac{1}{2}$ the integral \eqref{55} can be evaluated explicitly
\begin{align}
\Pr \ll \Gamma_t \l U^{2k} \l H^{\frac{1}{2}} \r \r \in dx \rr \, = \, & \frac{dx}{\pi} \int_x^\infty \frac{1}{\sqrt{x(s-x)}} \frac{te^{-\frac{t^2}{2s}}}{\sqrt{2\pi s^3}} ds \notag \\
= \, & \frac{dx \, t}{\pi \sqrt{2\pi x}} \int_0^{\frac{1}{x}} \frac{e^{-\frac{t^2}{2}y}}{\sqrt{1-xy}} dy \notag \\
= \, &\frac{dx \, t}{\pi \sqrt{2\pi x^3}} \int_0^1 \frac{e^{-\frac{t^2}{2x}w}}{\sqrt{1-w}} dw \notag \\
= \, & \frac{ dx \, t}{\pi \sqrt{2\pi x^3}} \sum_{k=0}^\infty \l -\frac{t^2}{2x} \r^k \frac{1}{k!} \int_0^1 w^k \l 1-w \r^{-\frac{1}{2}} \, dw \notag \\
= \, & \frac{dx \, t}{\pi \sqrt{2x^3}} E_{1, \frac{3}{2}} \l -\frac{t^2}{2x} \r, \qquad x>0, t>0,
\end{align}
where
\begin{equation}
E_{\nu, \mu} (x) \, = \, \sum_{j=0}^\infty \frac{x^j}{\Gamma (j\nu+\mu)}, \qquad \nu, \mu > 0,
\end{equation}
is the Mittag-Leffler function.

\begin{figure} [htp!]
	\centering
		\caption{The density $v^\gamma (x, t)$ for $\gamma > 2$ displays an oscillating behaviour similar to that of the fundamental solution of even-order heat equations.}
		\includegraphics[scale=0.24]{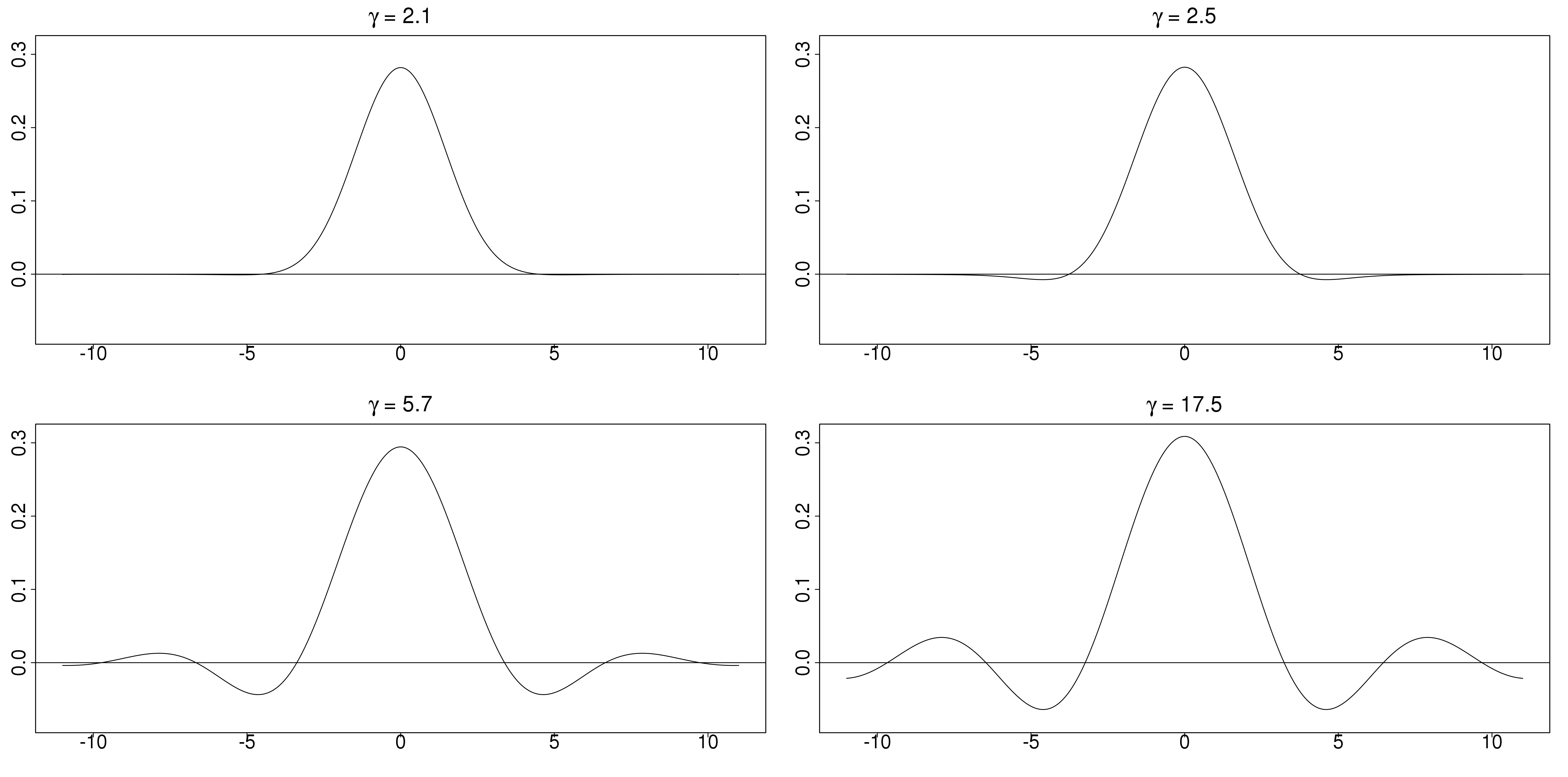} 
		\label{figura1}
\end{figure}

\section*{Ackwnoledgements}
The authors have benefited from fruitful discussions on the topics of this paper with Dr. Mirko D'Ovidio.

\end{document}